\newcommand{\old}[1]{{}}
\newcommand\st{\mid}
\newcommand{\R}{\mathbb{R}}
\newcommand{\one}{\textbf{1}}
\newcommand{\zero}{\textbf{0}}
\newtheorem{thm}{Theorem}[section]
\newtheorem{lemma}[thm]{Lemma}
\newtheorem{prop}[thm]{Proposition}
\newtheorem{cor}[thm]{Corollary}
\theoremstyle{definition} \newtheorem{alg}{Algorithm}[section]
\title{On Chubanov's method for Linear Programming}
\author{Amitabh Basu, Jesus De Loera, Mark Junod  \\ Department of Mathematics, University of California, Davis}
\begin{document}

\maketitle

\begin{abstract}
We discuss the method recently proposed by S. Chubanov for the linear feasibility problem. We present new, concise proofs and interpretations of some of his results. 
We then show how our proofs can be used to find strongly polynomial time algorithms for special classes of linear feasibility problems. Under certain conditions, these 
results provide new proofs of classical results obtained by Tardos, and Vavasis and Ye.
\end{abstract}

\section{Introduction}

In their now classical papers, Agmon \cite{agmon} and Motzkin and
Schoenberg \cite{motzkinschoenberg} introduced the so called
\emph{relaxation method} to determine the feasibility of a system of
linear inequalities (it is well-known that optimization and the
feasibility problem are polynomially equivalent to one another).
Starting from any initial point, a sequence of points is generated.
If the current point $z_i$ is feasible we stop, else there must be at
least one constraint $c^Tx \leq d$ that is violated. We will denote the corresponding hyperplane by $(c,d)$. Let $p_{(c,d)}(z_i)$ be the orthogonal projection of $z_i$ onto the hyperplane $(c,d)$, choose a number
$\lambda$ (usually chosen between $0$ and $2$), and define the new point
$z_{i+1}$ by $z_{i+1}=z_i+\lambda(p_{(c,d)}(z_i)-z_i)$. Agmon, Motzkin and Schoenberg showed that if the original system of inequalities is feasible, then this procedure generates a sequence of points which converges, in the limit, to a feasible point. So in practice, we stop either when we find a feasible point, or when we are close enough, i.e., any violated constraint is violated by a very small (predetermined) amount.

 Many different versions of the relaxation method have been proposed,
 depending on how the step-length multiplier $\lambda$ is chosen and
 which violated hyperplane is used.  For example, the well-known perceptron
 algorithms~\cite{perceptron} can be thought of as members of this family of
 methods. In addition to linear programming feasibility, similar iterative ideas
 have been used in the solution of overdetermined system of linear
 equations as in the Kaczmarz's method where iterated projections into
 hyperplanes are used to generate a solution (see \cite{Kaczmarzoriginal,strohmervershynin,Needell}).


The original relaxation method was shown early on to have a poor practical
convergence to a solution (and in fact, finiteness could only be
proved in some cases), thus relaxation methods took second place
behind other techniques for years.  During the height of the fame of
the ellipsoid method, the relaxation method was revisited with
interest because the two algorithms share a lot in common in their
structure (see \cite{amaldihauser,goffin,telgen} and references
therein) with the result that one can show that the relaxation method
is finite in all cases when using rational data, and thus can handle
infeasible systems. In some special cases the method did give a
polynomial time algorithm \cite{maurrasetal}, but in general it was
shown to be an exponential time algorithm (see \cite{goffinnonpoly,telgen}).  
Most recently in 2004, Betke gave a version that had polynomial guarantee 
in some cases and reported on experiments \cite{betkelp}.  In late 2010 
Sergei Chubanov presented a variant of the relaxation algorithm, that was based on the divide-and-conquer paradigm. We will refer to this algorithm as the {\em Chubanov Relaxation algorithm}.
The purpose of the Chubanov Relaxation algorithm \cite{chubanov} is to either 
find a solution of

\begin{equation} \label{eq:orsys}
\begin{array}{l}
Ax = b, \\
Cx \leq d
\end{array}
\end{equation}
\noindent
in $\R^n$, with $A$ an $m \times n$ matrix, $C$ an $l \times n$
matrix, $b \in \R^m$, and $d \in \R^l$, where the elements of $A$,
$b$, $C$, and $d$ are integers, or determine the system has no integer
solutions.  The advantage of Chubanov's algorithm is that when the inequalities take
the form $\textbf{0} \leq x \leq \textbf{1}$, then the algorithm runs
in \emph{strongly} polynomial time. This result can then be applied to give a
new polynomial time algorithm for linear optimization~\cite{chubanovlp}.  The purpose of
this paper is to investigate these recent ideas in the theory of linear optimization, simplify
some of his arguments, and show some consequences.
 
\subsection*{Our Results}

We start by explaining the basic details of Chubanov's algorithm in Section \ref{basicchubanov}; in particular, we outline the main ``Divide and Conquer'' subroutine from his paper. We will refer to this subroutine as {\em Chubanov's D\&C algorithm} in the rest of the paper. Chubanov's D\&C algorithm is the main ingredient in the Chubanov Relaxation algorithm. In the rest of Section \ref{basicchubanov}, we prove some key lemmas about the D\&C algorithm whose content can be summarized in the following theorem.

\begin{thm}\label{thm:summary}
Chubanov's D\&C algorithm can be used to infer one of the following statements about the system~\eqref{eq:orsys} :
\begin{itemize}
\item[(i)] A feasible solution to~\eqref{eq:orsys} exists, and can be found using the output of the D\&C algorithm.
\item[(ii)]  One of the inequalities in $Cx \leq d$ is an implied equality, i.e., there exists $k \in \{1, \ldots, l\}$ such that $c_k x = d$ for all solutions to~\eqref{eq:orsys}.
\item[(iii)] There exists $k \in \{1, \ldots, l\}$ such that $c_k x = d$ for all {\em integer} solutions to~\eqref{eq:orsys}.
\end{itemize}
\end{thm}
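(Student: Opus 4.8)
The plan is to read off the two possible terminal behaviors of Chubanov's D\&C algorithm and convert each into one of the three alternatives. Working inside the affine subspace $\{x : Ax = b\}$ — so that the equalities are handled implicitly, the relevant ``hyperplanes'' $(c_k,d_k)$ being orthogonally projected onto that subspace before any relaxation step — the algorithm either halts with an explicit point $z$ that satisfies every inequality of $Cx \le d$, in which case $z$ is feasible for \eqref{eq:orsys} and we are in alternative (i); or it halts having produced a nonnegative aggregation of the inequalities, i.e.\ a vector $y \ge 0$, $y \ne 0$, for which the valid inequality $y^\top C x \le y^\top d$ is ``almost tight'' on the feasible region. ``Almost tight'' must be made quantitative: the slack $y^\top d - y^\top C x$, which is automatically $\ge 0$ for every feasible $x$, is bounded above by a threshold $\theta$ governed by the iteration count of the divide-and-conquer recursion, and by running the recursion deep enough $\theta$ can be pushed below any prescribed quantity controlled by the subdeterminants of the data $A$ and $C$. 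So after this first step we may assume we are in the second, ``certificate'' case, and the task reduces to extracting an implied equality from $y$.

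Next I would split on whether the aggregated slack can be strictly positive somewhere on the feasible region. If $y^\top d - y^\top C x = 0$ for \emph{all} feasible $x$, then since $y \ge 0$ and each $d_k - c_k x \ge 0$ on the feasible region, every term $y_k(d_k - c_k x)$ vanishes identically; picking any index $k$ with $y_k > 0$ (one exists since $y \ne 0$) gives $c_k x = d_k$ for all feasible $x$, which is alternative (ii). Otherwise some feasible $\bar x$ has $y^\top d - y^\top C \bar x > 0$, and I restrict to \emph{integer} feasible solutions. For such $x$ each $d_k - c_k x$ is a nonnegative \emph{integer}, hence $0$ or $\ge 1$, so $y^\top d - y^\top C x$ is either $0$ or at least $\min\{\, y_k : y_k > 0 \,\}$. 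Choosing the recursion depth so that $\theta < \min\{\, y_k : y_k > 0 \,\}$ — which is precisely the quantitative content of the key lemmas, using that the entries $y_k$ cannot be arbitrarily small because they are assembled from subdeterminants of the data — forces $y^\top d - y^\top C x = 0$ for every integer feasible $x$, and then the same termwise argument yields $c_k x = d_k$ on all integer solutions for any $k$ with $y_k > 0$, which is alternative (iii).

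The main obstacle is the middle assertion of the first paragraph: proving that the ``certificate'' output of the D\&C algorithm genuinely is a \emph{valid} nonnegative aggregation of $Cx \le d$ carrying the stated smallness bound on its slack over the feasible region. This is exactly where the divide-and-conquer structure has to be exploited — one must see how a certificate returned by the recursive call on one half of the inequality system combines with the certificate from the other half, and with the projection/relaxation step that glues the two halves, to produce a single certificate for the whole system, while tracking how the aggregation weights and the slack bound transform under this combination so that the final $\theta$ provably drops below the integrality threshold. Carrying the equality block $Ax = b$ correctly through this recursion (every projection being an orthogonal projection within $\{x : Ax = b\}$, every aggregation being an aggregation of the projected inequalities) is a secondary but unavoidable bookkeeping point. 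Once that lemma is established, the dichotomy and disaggregation in the second paragraph are routine.
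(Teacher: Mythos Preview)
Your high-level shape is right, but the crucial mechanism is missing: the paper does \emph{not} run D\&C on the original system and then try to squeeze a small-slack certificate out of the recursion. Instead it first \emph{homogenizes}, passing to the lifted system $Ax - bt = 0,\; Cx - dt \le -1,\; -t \le -2$, and runs D\&C there with center $z = 0$ and radius $r = 2k(r^*+1)$ (where $\lVert x\rVert \le r^*$ on the original feasible set). The three D\&C outcomes on the \emph{lifted} system map cleanly to (i)--(iii): an $\epsilon$-approximate solution $(x^*,t^*)$ gives the exact solution $x^*/t^*$; the ``failure'' outcome ($h_1 = -\gamma h_2$) forces the lifted system to be infeasible, which by a short contrapositive argument (if no inequality is tight, a strict interior point can be scaled into the lifted system) yields an implied equality; and a separating hyperplane means every lifted solution has norm exceeding $2k(r^*+1)$, which by another contrapositive (build a lifted solution of small norm from $k$ witnesses, one per inequality) forces some strip $d_l - \tfrac{1}{2} \le c_l x \le d_l$, hence equality on integer points. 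The homogenization is what converts ``ball separated from $P$'' into ``$P$ is thin,'' and you have no substitute for it.

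Two specific points where your plan breaks. First, D\&C applied to the original system does \emph{not} output ``a nonnegative aggregation $y$ with small slack on $P$''; it outputs a separating hyperplane $hx \le \delta$ valid for $P$ with $hz - \delta \ge r\lVert h\rVert$, which is a statement about the \emph{ball}, not about slacks on $P$ --- if $r$ is large enough to contain $P$, this simply says $P = \emptyset$, not that $P$ is thin. Second, your claim that the aggregation weights $y_k$ are bounded below ``because they are assembled from subdeterminants of the data'' is false: they are built by iterated convex combinations $\alpha h_1 + (1-\alpha)h_2$ across the recursion tree and can be made arbitrarily small, so the inequality $\theta < \min\{y_k : y_k > 0\}$ cannot be arranged by recursion depth alone. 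The paper (and Chubanov) sidestep both issues by working in the homogenized cone, where the extra variable $t$ lets one trade depth for radius and where ``far from the origin'' has the needed geometric meaning.
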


A constructive proof for the above theorem can be obtained using results in Chubanov's original paper~\cite{chubanov}. Our contribution here is to provide a different, albeit existential, proof of this theorem. We feel our proof is more geometric and simpler than Chubanov's original proof. We hope this will help to expose more clearly the main intuition behind Chubanov's D\&C algorithm, which is the workhorse behind Chubanov's results. Of course, Chubanov's constructive proof is more powerful in that it enables him to prove the following fascinating theorem.

\begin{thm}\label{thm:0-1sol}[see Theorem 5.1 in~\cite{chubanov}]
Chubanov's Relaxation algorithm either finds a solution to the system 
\begin{equation}\label{eq:0-1sys}\begin{array}{l}
Ax = b, \\
\mathbf{0} \leq x \leq \mathbf{1},
\end{array}
\end{equation}
or decides that there are no integer solutions to this system. Moreover, the algorithm runs in strongly polynomial time.
\end{thm}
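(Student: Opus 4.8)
The plan is to analyze Chubanov's Relaxation algorithm as an outer loop that repeatedly invokes the D\&C algorithm on a system of the form~\eqref{eq:orsys} in which the inequality block $Cx \le d$ always consists of (a subset of) the box constraints $\mathbf{0} \le x \le \mathbf{1}$, maintaining the invariant that the original problem~\eqref{eq:0-1sys} has an integer solution if and only if the current system does. Apply Theorem~\ref{thm:summary} to the current system. If it returns alternative (i), we extract a feasible point and lift it through the coordinates already fixed in earlier rounds to obtain a genuine solution of~\eqref{eq:0-1sys}. If it returns alternative (ii) or (iii), there is an index $k$ with $c_k x = d_k$ on all (integer) solutions of the current system; but $(c_k,d_k)$ is a row of the box, i.e.\ either $-x_i \le 0$ or $x_i \le 1$, so this says exactly that some coordinate $x_i$ is forced to $0$, respectively to $1$, on all integer solutions. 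We then substitute that value, delete the corresponding column of $A$ (which changes $b$ only by that column, and only when the value is $1$), keep the box constraints on the surviving coordinates, and recurse. Fixing a coordinate to a $0$--$1$ value preserves the invariant even in case (iii), where the conclusion is only asserted for integer solutions, because we are hunting for a $0$--$1$ point and therefore lose no candidate.

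Next I would bound the number of outer iterations. Every call that returns (ii) or (iii) removes one variable, so after at most $n$ such calls either some call returns (i) --- and we are done as above --- or every coordinate has been pinned to a value $v_i \in \{0,1\}$. In the latter case the chain of deductions built up along the recursion shows that any integer solution of~\eqref{eq:0-1sys} must equal $(v_1,\dots,v_n)$, so we test this single point against $Ax=b$ and either output it or conclude, correctly, that~\eqref{eq:0-1sys} has no integer solution. (Equivalently, an inconsistency of the form $0=\beta$ with $\beta\neq 0$ may already surface in $Ax=b$ during the substitutions, which likewise certifies infeasibility.) Thus correctness follows purely from Theorem~\ref{thm:summary} and $O(n)$ bookkeeping.

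Finally I would argue strong polynomiality, which has two components. For the bit-sizes of the numbers: the substitutions only ever insert $0$ or $1$, so $A$ merely loses columns while $b$ is replaced by $b$ minus a subset of the original columns of $A$; hence after all $n$ reductions every right-hand-side entry has magnitude at most $\|b\|_\infty + n\max_{i,j}|A_{ij}|$, which is polynomial in the input size. For the operation count: the outer loop runs $O(n)$ times, so it suffices that a single D\&C call on a box system performs a number of arithmetic operations bounded by a polynomial in $n,m,l$ alone, and that its internal vectors stay of polynomially bounded size. This last point is the crux and the step I expect to be the real obstacle: it is not implied by the (existential) statement of Theorem~\ref{thm:summary} and requires the quantitative version of its analysis --- one must show that on the $0$--$1$ box the projection/rescaling steps inside D\&C make geometric progress at a rate governed by the dimension alone (Chubanov's ``doubling'' estimate), so that the recursion depth and the total number of projections are bounded by a fixed polynomial in $n$ independent of the entries of $A$ and $b$, and that the Gram--Schmidt-type vectors produced by the projections can be maintained with polynomially many bits. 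Granting this quantitative refinement --- precisely what Chubanov's constructive argument provides --- the bound on outer iterations and the bound on number sizes combine to yield a strongly polynomial algorithm.
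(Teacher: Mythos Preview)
Your outer-loop structure is exactly the one the paper sketches (in the subsection on Chubanov's proof of Theorem~\ref{thm:0-1sol}): iterate D\&C, convert one inequality to an equality each round, and after at most $l$ rounds solve a pure linear system. Your reduction by substituting $x_i \in \{0,1\}$ is equivalent to moving the corresponding box inequality into the equality block, and your invariant argument is sound.

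However, you have misidentified the principal missing ingredient. The D\&C operation count is \emph{already} strongly polynomial for the $0$--$1$ box: by Proposition~\ref{prop:run-time} the complexity is governed by $n,m$ and the parameters $r,\epsilon,\lVert c_{\max}\rVert$, and for the homogenized box system one has $r = O(n^{3/2})$, $\epsilon=1$, $\lVert c_{\max}\rVert=O(1)$, so $K$ and the full bound~\eqref{eq:run-time} are polynomials in $n,m$ alone --- no new ``doubling estimate'' is required. (Your concern about bit growth of the internal vectors is legitimate but secondary; Chubanov handles it by rounding.) What Theorem~\ref{thm:summary} genuinely fails to supply is not a running-time bound but the \emph{identity} of the index $k$ in alternatives~(ii) and~(iii): the proofs of Propositions~\ref{prop:eq-forced} and~\ref{prop:thin-strip} argue by contradiction and never exhibit $k$. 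Your outer loop must know which coordinate to fix, so as written it cannot proceed past the first round that does not return~(i). The paper stresses exactly this point (see the remark immediately after Theorem~\ref{thm:summary}): Chubanov's constructive proof tracks the nonnegative multipliers with which D\&C assembles its induced hyperplanes and reads off $k$ from the largest such coefficient. That identification step, not a sharper running-time analysis, is the piece you are missing.
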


This is an interesting theorem and leads to a new polynomial time algorithm for Linear Programming~\cite{chubanovlp}. However, it suffers from the drawback that it does not lead to a strongly polynomial time linear programming algorithm, even in the restricted setting of variables bounded between 0 and 1. Using the intuition behind our own proofs of Theorem~\ref{thm:summary}, we are able to demonstrate how Chubanov's D\&C algorithm can be used to give a strongly polynomial algorithm for deciding the feasibility or infeasibility of a system like~\eqref{eq:0-1sys}, under certain additional assumptions. In particular, we show the following result about bounded linear feasibility problems in Section~\ref{strictlps}.

\begin{thm}\label{thm:totally-unimodular} Consider the linear program given by 

\begin{equation}\label{eq:bddx-intro}\begin{array}{l}Ax = b, \\ \mathbf{0} \leq x \leq \lambda \mathbf{1}.\end{array}\end{equation} 

Suppose $A$ is a totally unimodular matrix and $\lambda$ is bounded by a polynomial in $n, m$ (the latter happens, for instance, when $\lambda=1$). Furthermore, suppose we know that if \eqref{eq:bddx-intro} is feasible, it has a strictly feasible solution. Then there exists a strongly polynomial time algorithm that either finds a feasible solution of \eqref{eq:bddx-intro}, or correctly decides that the system is infeasible. The running time for this algorithm is $O \left(m^3 + m^2n + n^2m + n^2(2n\lambda\sqrt{2n + 1})^{\frac{1}{\log_2\left(\frac{7}{5}\right)}}\right)$. If $\lambda=1$, then the running time can be upper bounded by $O (m^3 + m^2n + n^2m + n^{5.1})$.
\end{thm}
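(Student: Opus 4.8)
The plan is to feed the system~\eqref{eq:bddx-intro} directly to Chubanov's D\&C algorithm, treating the box constraints $\mathbf 0 \le x \le \lambda\mathbf 1$ as the inequality block $Cx \le d$ of~\eqref{eq:orsys} (so that each inequality reads $-x_k \le 0$ or $x_k \le \lambda$), and then to use total unimodularity to halt the algorithm after only polynomially many iterations. By Theorem~\ref{thm:summary} the algorithm returns one of the outcomes (i), (ii), (iii). Outcome (i) hands us a feasible solution of~\eqref{eq:bddx-intro}, so we are done. The hypothesis that feasibility of~\eqref{eq:bddx-intro} entails strict feasibility is exactly what makes the other two outcomes usable: a strictly feasible point satisfies $0 < x_k < \lambda$ for every $k$, hence satisfies every constraint of $Cx \le d$ with strict inequality, so if the D\&C algorithm certifies that some such inequality is an implied equality (outcome (ii)), then~\eqref{eq:bddx-intro} can have no feasible point at all, and we report infeasibility, correctly.

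Next I would argue that outcome (iii) is equally conclusive. When $\lambda\in\Z$ this is immediate from total unimodularity: the set $P=\{x : Ax=b,\ \mathbf 0\le x\le\lambda\mathbf 1\}$ is bounded and, since $A$ is totally unimodular and $b,\lambda$ are integral, all of its vertices are integral, so $P$ is an integral polytope; were $P$ nonempty, a strictly feasible point $\bar x$ with $\bar x_k>0$ would show $P\not\subseteq\{x : x_k=0\}$ and hence force an \emph{integral} vertex $v$ with $v_k>0$, i.e.\ an integer solution violating the conclusion of outcome (iii) --- contradiction, so $P=\emptyset$. For general (polynomially bounded, possibly non-integral) $\lambda$ I would instead lean on the precision analysis below: run with sufficiently fine precision, feasibility of~\eqref{eq:bddx-intro} forces outcome (i), so again outcomes (ii) and (iii) certify infeasibility.

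The running-time estimate is where the real work lies. From the lemmas of Section~\ref{basicchubanov} underlying Theorem~\ref{thm:summary} one extracts two facts: when run with precision parameter $\varepsilon$, Chubanov's D\&C algorithm is guaranteed to return one of (i)--(iii) as soon as $\varepsilon$ drops below the reciprocal of a quantity bounding the vertices of the feasible polyhedron of~\eqref{eq:bddx-intro} and of its associated dual (Farkas) certificates; and the work it then performs grows like $\mathrm{poly}(n,m)\cdot(1/\varepsilon)^{1/\log_2(7/5)}$, the exponent reflecting the factor $7/5$ by which the divide-and-conquer recursion sharpens precision from one level to the next. The plan is to show that for totally unimodular $A$ this threshold is merely inverse-polynomial. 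First, if~\eqref{eq:bddx-intro} is feasible then $\|b\|_\infty=\|Ax\|_\infty\le n\lambda$, so otherwise we report infeasibility immediately; granting this, Cramer's rule expresses each relevant vertex as a vector of subdeterminants of a matrix assembled from columns of $A$ and unit vectors, divided by a $\pm 1$ determinant, and total unimodularity pins all these subdeterminants to $\{0,\pm 1\}$ (scaled by entries of $b$ or by $\lambda$), which yields the norm bound $2n\lambda\sqrt{2n+1}$. Taking $\varepsilon$ just below $1/(2n\lambda\sqrt{2n+1})$, the D\&C phase costs $O\!\big(n^2(2n\lambda\sqrt{2n+1})^{1/\log_2(7/5)}\big)$ --- the $n^2$ prefactor absorbing the linear algebra performed across the $O(n)$-node recursion tree --- while reducing $A$ to full row rank, forming the projection onto $\ker A$, and solving the linear systems needed to turn the D\&C output into an actual point contribute $O(m^3+m^2n+n^2m)$; substituting $\lambda=1$ collapses the estimate to $O(m^3+m^2n+n^2m+n^{5.1})$.

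The hard part is that last precision bound: making rigorous the claim that total unimodularity really forces the required precision down to $1/\mathrm{poly}(n,\lambda)$, so that the a priori exponential factor $(1/\varepsilon)^{1/\log_2(7/5)}$ stays polynomial. A secondary wrinkle is the non-integral $\lambda$ case of outcome (iii), where one must route through this precision bound rather than through the clean integral-polytope argument. The remaining ingredients --- the case split on (i)--(iii), the integrality of $P$, and the bookkeeping of the recursion-tree cost --- are routine once the precision bound is in hand.
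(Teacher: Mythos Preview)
Your correctness argument is sound and takes a genuinely different route from the paper. The paper does not invoke Theorem~\ref{thm:summary} at all here. Instead it first proves a general result (Corollary~\ref{cor:bounded-lp}) for any integer matrix $A$: introduce slacks $x+y=\lambda\mathbf 1$ to reach standard form $\tilde A\tilde x=\tilde b,\ \tilde x\ge 0$, homogenize and strengthen to $\tilde A\tilde x-\tilde b t=0,\ -\tilde x\le -\mathbf 1,\ -t\le -2$, and run D\&C once with $\epsilon=1$ and radius $\hat r=2n\Delta_A\sqrt{r^2+1}$. The key lemma (Lemma~\ref{lemma:strictly-feas}) says that strict feasibility forces a point with every coordinate at least $1/(n\Delta_A)$; scaling this point by $2n\Delta_A$ lands a feasible point of the strengthened homogenized system inside $B(0,\hat r)$, so if D\&C fails or returns a separating hyperplane the original system must be infeasible --- no integrality argument, no appeal to outcome (iii). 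The theorem then follows in one line by setting $\Delta_A=1$. What the paper's route buys is generality (it yields Corollary~\ref{cor:bounded-lp} for arbitrary $A$ with the running time scaling in $\Delta_A$); what your route buys is that, for the TU case specifically, the integrality of the polytope lets you dispose of outcome (iii) without ever tracking $\Delta_A$ in the radius.

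That said, your running-time paragraph misidentifies the mechanism. Throughout the paper $\epsilon=1$ is \emph{fixed}; nothing ``drops $\varepsilon$ below'' any threshold. What governs the cost is the radius $r$ fed to D\&C (Proposition~\ref{prop:run-time}: the dominant term is $K=(r\lVert c_{\max}\rVert/\epsilon)^{1/\log_2(7/5)}$). Via Theorem~\ref{thm:summary} the radius is $2k(r^*+1)$ with $k=2n$ and $r^*\le\lambda\sqrt n$ coming trivially from the box constraints --- so the radius is polynomial in $n,\lambda$ \emph{without any appeal to total unimodularity}; Cramer's rule, subdeterminants, and the $\|b\|_\infty$ check are all beside the point. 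In your argument TU is used only for correctness (integrality of vertices to kill outcome (iii)), not for the complexity bound. You arrive at the paper's constant $2n\lambda\sqrt{2n+1}$, but that number comes from the slack-variable reformulation and Lemma~\ref{lemma:strictly-feas}, neither of which you use; via Theorem~\ref{thm:summary} directly you would get a slightly different polynomial radius (and, unless you also introduce slacks, a $\lVert c_{\max}\rVert=\sqrt{1+\lambda^2}$ factor). The asymptotics still match for $\lambda=1$, but your derivation of the precise constants does not track your own approach.
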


This theorem partially recovers E. Tardos' result on combinatorial LPs~\cite{tardos-strongly-poly}. Tardos' results were also obtained by Vavasis and Ye using interior point methods~\cite{yevavasis}, which is different from Tardos' approach. Our Theorem~\ref{thm:totally-unimodular} proves a weaker version of these classical results using a completely different set of tools, inspired by Chubanov's ideas.
Tardos' result is much stronger because she does not assume any upper bounds on the variables ($\lambda = \infty$), does not assume strictly feasible solutions, and only assumes that the entries of $A$ are polynomially bounded by $n,m$. Nevertheless our result has some interest as the techniques
are completely different from those in \cite{tardos-strongly-poly}, \cite{yevavasis}.

We also show that Chubanov's D\&C subroutine can be used to construct a new algorithm for solving general linear programs. 
This is completely different from Chubanov's linear programming algorithm in~\cite{chubanovlp}. However, the general algorithm 
that we present in this paper is not guaranteed to run in polynomial time. On the other hand, it has the advantage of avoiding some complicated reformulations that Chubanov uses to extract a general purpose LP algorithm from the Chubanov Relaxation algorithm. Moreover,  we can solve the problem in a single application of the D\&C subroutine, whereas the Chubanov Relaxation algorithm needs multiple applications. \old{These differences proved to be significant for practical purposes and we see that in practice our general purpose LP algorithm actually runs faster than Chubanov's algorithm. These, along with other experimental observations, are 
presented in Section~\ref{experiments}.}

\section{Chubanov's Divide and Conquer Algorithm} \label{basicchubanov}

In this section we will outline Chubanov's main subroutines for the D\&C algorithm as presented in \cite{chubanov}.

\par First, a couple of assumptions and some
notation.  We assume the matrices $A$ and $C$ have no zero rows and that $A$ is of
full rank.  Note if $A$ does not have full rank we can easily
transform the system into another, $A'x = b'$, $Cx \leq d$, such that
$A'$ has full rank without affecting the set of feasible solutions.
Let $a_i$ denote the $i$-th row of $A$ and $c_k$ denote the $k$-th row
of $C$.  $P$ will denote the set of feasible solutions of
\eqref{eq:orsys}.  Finally, $B(z,r)$ will denote the open ball
centered at $z$ of radius $r$ in $\R^n$.

One new idea of Chubanov's algorithm is its use of new induced inequalities.  Unlike
Motzkin and Schoenberg \cite{motzkinschoenberg}, who only projected onto the original
hyperplanes that describe the polyhedron $P$ (see top of Figure \ref{fig:newidea1}),
Chubanov constructs new valid inequalities along the way and projects onto them too (bottom part of Figure \ref{fig:newidea1}).

\begin{figure}[htb]
\centering
\includegraphics[width=7cm]{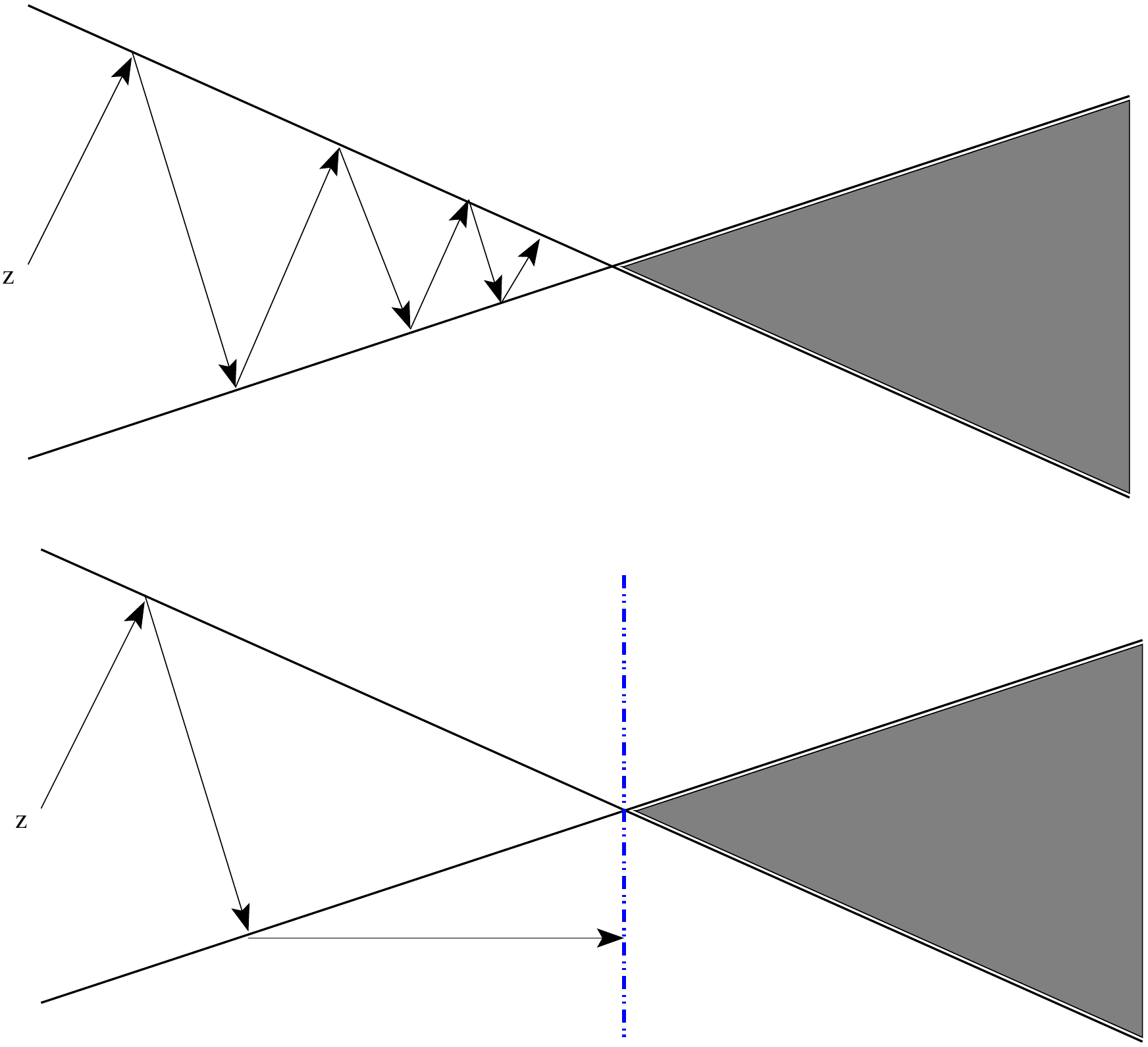}
\caption{Chubanov's method generates new inequalities on the way.}
\label{fig:newidea1}
\end{figure}
\old{
These new inequalities are linear combinations of the
input $a_i$'s and $b_i$'s and nonnegative linear combinations of the $c_k$'s
and $d_k$'s, called induced hyperplanes. Thus if $hx = \delta$ is an
induced hyperplane, then we have
$$h = \sum_{i = 1}^m \lambda_i a_i + \sum_{k = 1}^l \alpha_k c_k$$
and
$$\delta = \sum_{i = 1}^m \lambda_i d_i + \sum_{k = 1}^l \alpha_k d_k$$
where the $\lambda_i$'s and $\alpha_k$'s are real coefficients $\geq 0$.  
Exactly where these coefficients come from is explained later in this section. 
}

The aim of Chubanov's D\&C algorithm is to achieve the following. Given a
current guess $z$, a radius $r$ and an error bound $\epsilon > 0$, \old{the ball $B(z,r)$, }the algorithm will either:
\begin{enumerate}[(1)]
\item Find an $\epsilon$-approximate solution $x^* \in B(z,r)$ to \eqref{eq:orsys}, i.e. some $x^*$ such that 
$$Ax^* = b, \ Cx^* \leq d + \epsilon \one,$$ 
\item Or find an induced hyperplane $hx = \delta$ that separates $B(z,r)$ from $P$.
\end{enumerate}
This task is achieved using a recursive algorithm. In the base case, Chubanov uses a subroutine called \emph{Chubanov's Elementary Procedure} (EP), which achieves the above goal if $r$ is small enough; namely, when $r \leq \frac{\epsilon}{2 \|c_{\textrm{max}}\|}$, where $\|c_{\textrm{max}}\| = \max_{1 \leq k \leq l} (\|c_k\|)$. \old{Achieving this is initially the job of \emph{Chubanov's Elementary Procedure} (EP), which is called only when $r \leq \frac{\epsilon}{2 \|c_{\textrm{max}}\|}$, where $\|c_{\textrm{max}}\| = \max_{1 \leq k \leq l} (\|c_k\|)$.  The EP procedure has three distinct outcomes that solves this task.  }Let $p_{(A,b)}(z)$ denote the projection of $z$ onto the affine subspace defined by $Ax = b$.

\begin{alg}{\small THE ELEMENTARY PROCEDURE} \\
\emph{Input:} A system $Ax = b$, $Cx, \leq d$ and the triple $(z,r,\epsilon)$ where $r \leq \frac{\epsilon}{2 \|c_{\textrm{max}}\|}$. \\
\emph{Output:} Either an $\epsilon$-approximate solution $x^*$ or a separating hyperplane $hx = \delta$. \\
\\
{\bf If} $\|p_{(A,b)}(z) - z\| < r$ and $\frac{c_k z - d_k}{\|c_k\|} < r$ for all $k$ \\
\hspace*{.2in} {\bf Then} $x^* = p_{(A,b)}(z)$ is an $\epsilon$-approximate solution (see Figure \ref{fig:EP_Solution})\\
{\bf Else If} $\|p_{(A,b)}(z) - z\| \geq r$ \\
\hspace*{.2in} {\bf Then} let $h = (z - p_{(A,b)}(z))^T$ and $\delta = h \cdot p_{(A,b)}(z)$ (see Figure \ref{fig:EP_Affine}) \\
{\bf Else} $\frac{c_{k_0} z - d_{k_0}}{\|c_{k_0}\|} \geq r$ for some index $k_0$ \\
\hspace*{.2in} {\bf Then} let $h = c_{k_0}$ and $\delta = d_{k_0}$ (see Figure \ref{fig:EP_Inequals}) \\
{\bf End If}
\end{alg}

\begin{figure}[htbp]
\centering
\scalebox{0.8}{
\input 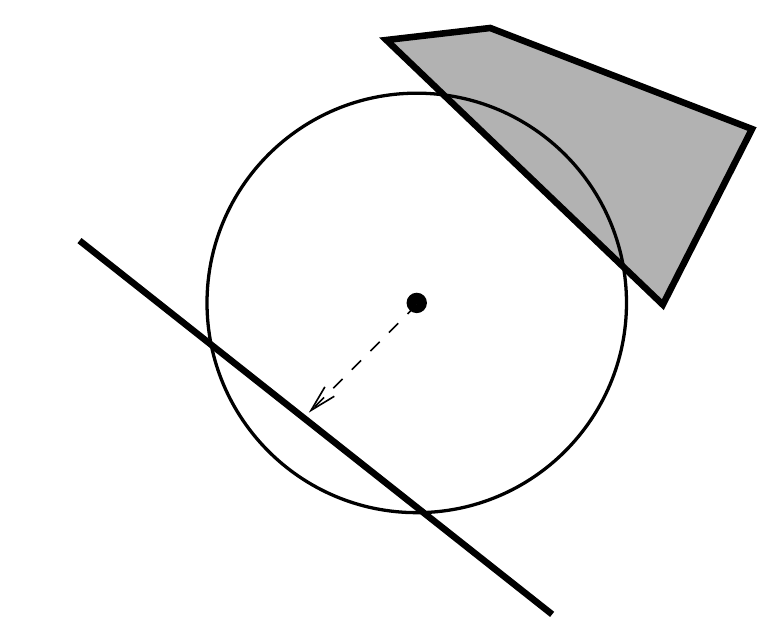_t
}
\caption{The projection $p(z)$ is an $\epsilon$-approximate solution.}
\label{fig:EP_Solution}
\end{figure}

\begin{figure}[htbp]
\centering
\scalebox{0.7}{
\input 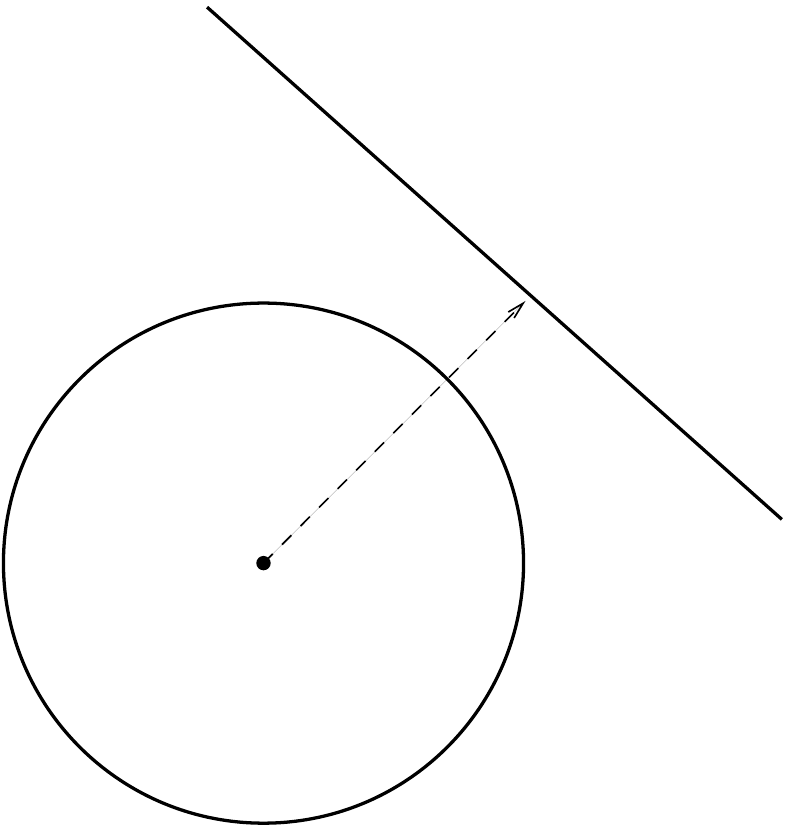_t
}
\caption{A separating hyperplane is given by the projection direction $(z-p(z))^T$.}
\label{fig:EP_Affine}
\end{figure}

\begin{figure}[htbp]
\centering
\scalebox{0.8}{
\input 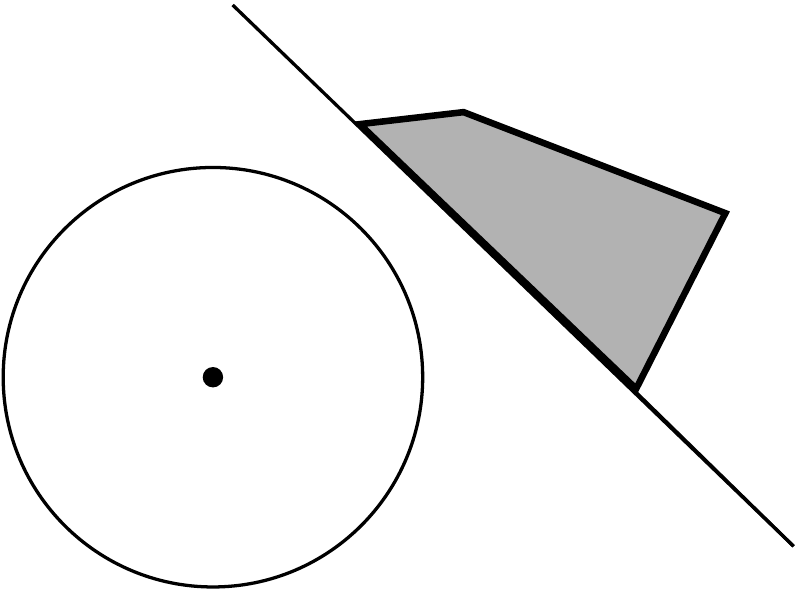_t
}
\caption{A separating hyperplane is given by a violated constraint.}
\label{fig:EP_Inequals}
\end{figure}

Note $\frac{c_k z - d_k}{\|c_k\|}$ tells us how far $z$ is from the
hyperplane $c_k x = d_k$, and it is in fact negative if $z$ satisfies the
inequality $c_k x \leq d_k$.  Thus if $\frac{c_k z - d_k}{\|c_k\|} <
r$, then any point in $B(z,r)$ is an $\epsilon$-approximation of $c_k
x \leq d_k$. This simple observation is enough to see that the EP procedure solves the task when $r \leq \frac{\epsilon}{2
  \|c_{\textrm{max}}\|}$. \old{So if (i) holds, then it follows $p(z)$ solves the first
task.  The other outcomes, (ii) and (iii), solve the second task with
similar logic. } See Chubanov \cite{chubanov} Section 2 for more details
and proofs. \par

The elementary procedure achieves the goal when $r$ is small enough, but what happens when $r > \frac{\epsilon}{2\|c_{\textrm{max}}\|}$? Then the D\&C Algorithm makes recursive steps with smaller values of $r$. To complete these recursive steps, the D\&C algorithm uses additional projections and separating hyperplanes. We give more details below. Figure~\ref{fig:two_circles} illustrates some of the steps in this recursion. A sample recursion tree is shown in Figure \ref{fig:recursion_tree}.

\begin{figure}[htbp]
\centering
\scalebox{0.5}{
\includegraphics{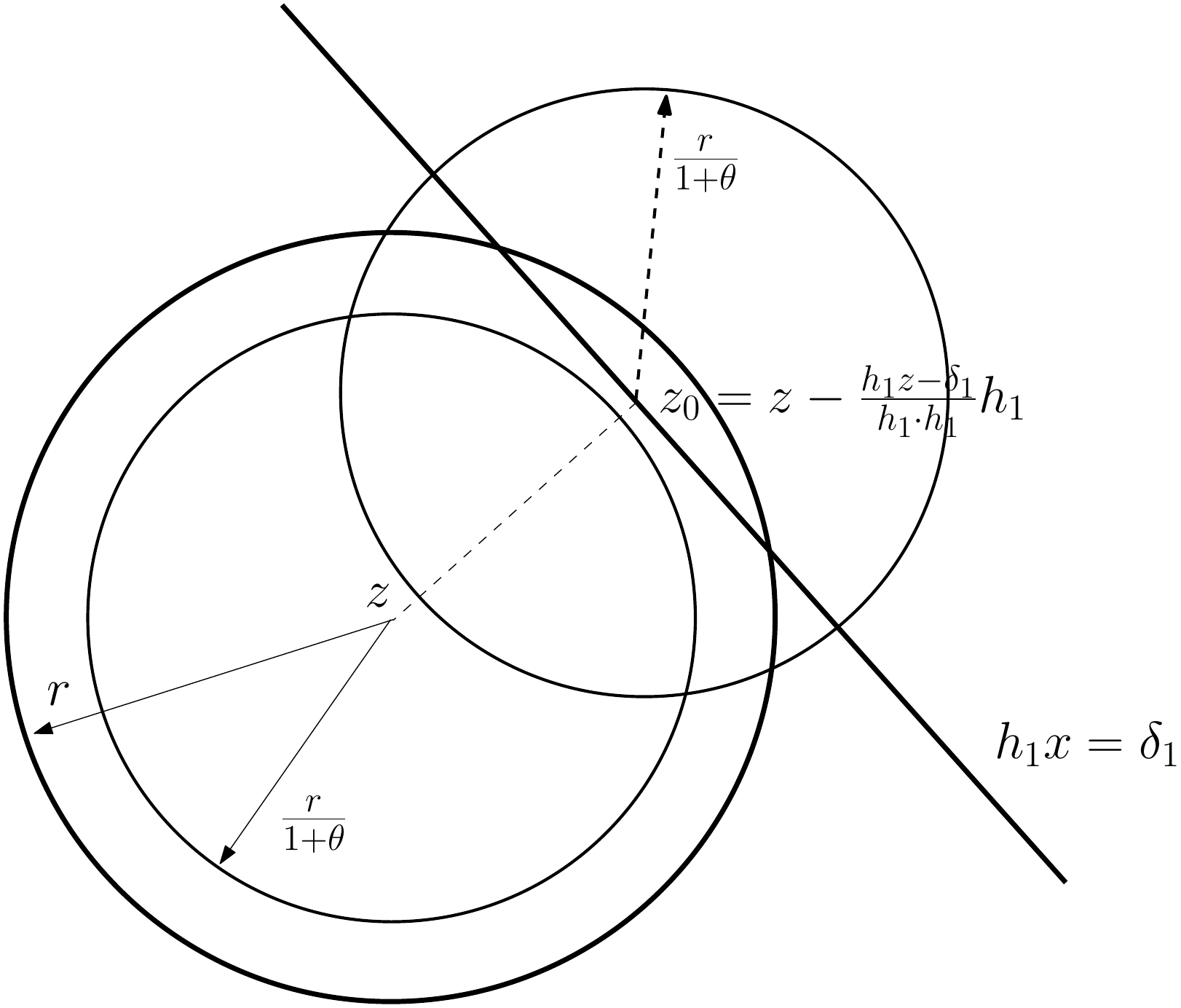}
}
\caption{The recursive step in D\&C works on two smaller balls with centers $z$ and $z_0$}
\label{fig:two_circles}
\end{figure}

\old{However, note the EP works only when we have $r \leq \frac{\epsilon}{2
  \|c_{\textrm{max}}\|}$.  When $r$ is larger than this, we rely on 
  \emph{Chubanov's Divide and Conquer Algorithm} (D\&C), which 
calls the EP as a subroutine in order to help solve the two tasks.  At
its most basic, if $r$ is small enough, the D\&C simply calls the
Elementary Procedure.  Otherwise the algorithm recursively finds
smaller and smaller $r$ until the EP can be applied.  The D\&C then
continues to test for both $\epsilon$-approximate solutions $x^*$ and
separating hyperplanes as it works its way back up through the
recursion (see Figure \ref{fig:recursion_tree}).} \par

\begin{alg} {\small THE D\&C ALGORITHM} \\
\emph{Input:} A system $Ax = b$, $Cx \leq d$ and the triple $(z, r, \epsilon)$. \\
\emph{Output:} Either an $\epsilon$-approximate solution $x^*$, or a separating hyperplane $hx = \delta$. \\

\noindent \textbf{If} $r \leq \frac{\epsilon}{2 \| c_{\textrm{max}} \|}$ \\
\hspace*{.2in} \textbf{Then} run the EP on the system \\
\textbf{Else} recursively run the D\&C Algorithm with $(z, \frac{1}{1 + \theta}r, \epsilon)$ \\
{\bf End If} \\

\noindent \textbf{If} the recursive call returns a solution $x^*$ \\
\hspace*{.2in} \textbf{Return} $x^*$ \\
\textbf{Else} let $h_1 x = \delta_1$ be returned by the recursive call \\
{\bf End If} \\

\noindent \textbf{Set} $z_0 = z - \frac{h_1 z - \delta_1}{h_1 \cdot h_1} \cdot h_1$, i.e., project $z$ onto $(h_1, \delta_1)$ (see Figure~\ref{fig:two_circles})\\
\textbf{Run} the D\&C Algorithm with $(z_0, \frac{1}{1 + \theta}r, \epsilon)$ \\

\noindent \textbf{If} the recursive call returns a solution $x^*$ \\
\hspace*{.2in} \textbf{Return} $x^*$ \\
\textbf{Else} let $h_2 x = \delta_2$ be returned by the recursive call \\
{\bf End If} \\

\noindent \textbf{If} $h_1 = -\gamma h_2$ for some $\gamma > 0$ \\
\hspace*{.2in} \textbf{Then} STOP, the algorithm fails \\
\textbf{Else} Find $\alpha$ such that $h = \alpha h_1 + (1-\alpha)h_2$, $\delta = \alpha\delta_1 + (1-\alpha)\delta_2$, and $\frac{hz - \delta}{\|h\|} \geq r$ \\
\hspace*{.2in} \textbf{Return} $hx = \delta$
{\bf End If} \\
\end{alg}

\begin{figure}[htpb]
\centering
\input 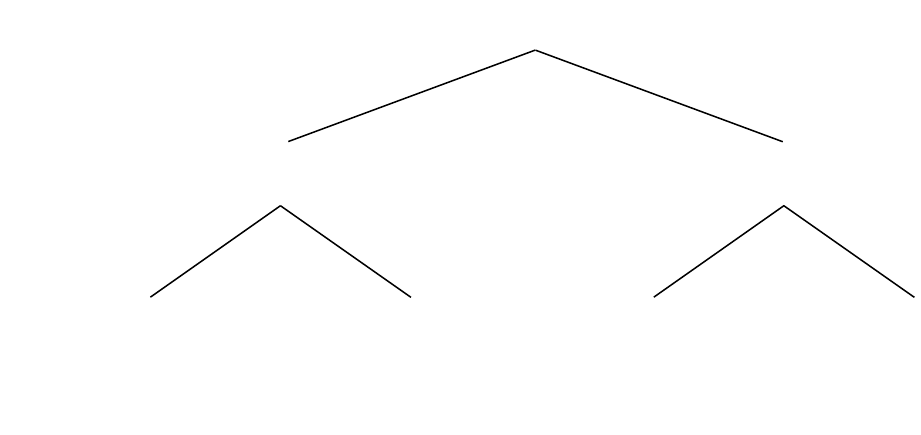_t
\caption{D\&C Recursion Tree}
\label{fig:recursion_tree}
\end{figure}

\old{
There are a couple things we need to note about the above algorithm:
\begin{enumerate}
\item Only the EP can truly return an $\epsilon$-approximate solution.  In the higher levels of the recursion process, when $r > \frac{\epsilon}{2 \| c_{\textrm{max}} \|}$ , the D\&C only returns new induced separating hyperplanes. 
\item In the higher levels of the recursion, we are really only dealing with some $h_1$ and $h_2$ returned from the previous level (see figure {\bf MISSING, DO WE HAVE ALL FIGURES FROM HIS SLIDES?}).  This takes place exclusively in the second to final \textbf{Else} statement of Algorithm 2.1.  
\item The induced hyperplanes are not added in any way to the system \eqref{eq:orsys}.  As soon as the D\&C moves beyond that level of the recursion, the induced hyperplanes no longer need to be held in memory.  

\item If the original system is infeasible, then no solution $x^*$ will be returned, even if the $\epsilon$-approximate system is feasible.

\item Chubanov has ways to deal with the growth of the numbers used in the D\&C algorithm and proves its correctness.  We
skip these and other details here, including the running time of the algorithm, see Section three of \cite{chubanov}. \par

\item Note that there is a strange out come in the D\&C labeled as ``fails''. This occurs when it 
returns $h_1$ and $h_2$ such that $h_1 = -\gamma h_2$. This is a gray area that does not give a desire outcome and
needs to be addressed somehow.

\end{enumerate}
}

We now state the running time of the Chubanov D\&C Algorithm. \old{We first define certain parameters depending on the input matrices $A$ and $C$. Let $\mu = \frac{2\epsilon}{28n\lVert c_{max}\rVert^2}$ where $c_{max}$ is the row of $C$ with maximum norm. Let $\rho$ be a number such that $|z_j| \leq \rho$ and $\rho z_j$ is an integer for all components $z_j$ of the center $z$ in the input to the D\&C algorithm. The matrix $A$ is assumed to have $m$ rows and $n$ columns and let $N$ be the number of non-zero entries in the matrix $C$. Let $K = \big(\frac{r\lVert c_{max}\rVert}{\epsilon}\big)^\frac{1}{log_2 (7/5)}$.}

\begin{prop}\label{prop:run-time}[Theorem 3.1 in \cite{chubanov}]
The matrix $A$ is assumed to have $m$ rows and $n$ columns and let $N$ be the number of non-zero entries in the matrix $C$. Let $\mu = \frac{2\epsilon}{28n\lVert c_{max}\rVert^2}$ where $c_{max}$ is the row of $C$ with maximum norm. Let $\rho$ be a number such that $|z_j| \leq \rho$ and $\rho z_j$ is an integer for all components $z_j$ of the center $z$ in the input to the D\&C algorithm. Let $K = \big(\frac{r\lVert c_{max}\rVert}{\epsilon}\big)^\frac{1}{log_2 (7/5)}$.

Chubanov's D\&C algorithm performs at most \begin{equation}\label{eq:run-time}O(m^3 + m^2n + n^2m + K\big(n\log(\frac{\rho + log(K)(r+n\mu)}{\mu}) + n^2 + N\big))\end{equation} arithmetic operations.
\end{prop}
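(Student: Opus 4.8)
The plan is to analyze the D\&C algorithm by a ``recursion-tree'' accounting: the total cost equals (one-time preprocessing) $+$ (number of nodes of the recursion tree) $\times$ (arithmetic performed inside a node, not counting its recursive calls), and I would match the three summands of~\eqref{eq:run-time} to these pieces. The preprocessing term $m^3 + m^2n + n^2m$ comes from setting up the orthogonal projection onto $\{x : Ax = b\}$ once: forming $AA^T$ costs $O(m^2n)$, factoring (equivalently inverting) it costs $O(m^3)$ using that $A$ has full row rank, and assembling the $n\times n$ projector $P_A = I_n - A^T(AA^T)^{-1}A$ together with a particular solution $x_0$ of $Ax = b$ costs $O(n^2m)$. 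After this, every evaluation $p_{(A,b)}(z) = x_0 + P_A(z-x_0)$ is a single $n\times n$ matrix--vector product, i.e.\ $O(n^2)$; the norms $\|c_k\|$ are precomputed as well.

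Next I would bound the size of the recursion tree. Each invocation with $r > \frac{\epsilon}{2\|c_{\textrm{max}}\|}$ spawns at most two recursive calls, each with radius $\frac{1}{1+\theta}r$, and the base case (the EP) triggers once $r \le \frac{\epsilon}{2\|c_{\textrm{max}}\|}$. Hence along any root-to-leaf path the radius is divided by $1+\theta$ at each step, so the depth is $D = O\big(\log_{1+\theta}\frac{2r\|c_{\textrm{max}}\|}{\epsilon}\big)$ and, with binary branching, the tree has at most $2^{D+1}-1 = O\big((r\|c_{\textrm{max}}\|/\epsilon)^{\log_{1+\theta}2}\big)$ nodes. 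For Chubanov's constant $\theta$ one has $1+\theta = \tfrac{7}{5}$, so $\log_{1+\theta}2 = \frac{1}{\log_2(7/5)}$ and the number of nodes is $O(K)$; note also that $D = O(\log K)$, which is exactly the $\log(K)$ appearing inside~\eqref{eq:run-time}. When a recursive call already returns an $\epsilon$-approximate solution the algorithm returns immediately, so the actual tree is a subtree of this full binary tree and the bound is unaffected.

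Then I would bound the arithmetic at a single node. Besides the $O(n^2)$ affine projection, an internal node projects its center onto an induced hyperplane (a rank-one operation, $O(n)$), tests the $l$ constraints $Cx \le d$ via one sparse product $Cz$ in $O(N)$ operations, and solves a one-dimensional equation for $\alpha$ in $h = \alpha h_1 + (1-\alpha)h_2$ in $O(n)$. The remaining contribution is the control of the magnitudes of the numbers: each projection can blow up the bit-length of the center, so every new center is rounded to the lattice of granularity $\mu$, which costs $O(\log(\text{magnitude}/\mu))$ per coordinate. A displacement estimate shows that one project-and-round step moves the center by $O(r + n\mu)$, and since every root-to-node path has length $O(\log K)$, all centers encountered have coordinates bounded by $O(\rho + \log(K)(r + n\mu))$, where $\rho$ bounds the input center. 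So a node costs $O\big(n\log\frac{\rho + \log(K)(r+n\mu)}{\mu} + n^2 + N\big)$; multiplying by the $O(K)$ nodes and adding the preprocessing gives~\eqref{eq:run-time}.

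The tree-size count and the flop counts are routine; I expect the delicate part — and the place where Chubanov's detailed estimates are really needed — to be the bit-size control behind the factor $\log\frac{\rho + \log(K)(r+n\mu)}{\mu}$. This requires (a) a quantitative invariant bounding, at each level, how far $z_0$ (the projection of $z$ onto $(h_1,\delta_1)$) can lie from $z$ in terms of $r$ and the granularity $\mu$, together with the accumulation of this displacement over the $O(\log K)$ levels and the control of the denominators of the rounded induced hyperplanes; and (b) the geometric guarantee — available precisely because the ``fails'' branch $h_1 = -\gamma h_2$ is excluded — that a convex combination $h = \alpha h_1 + (1-\alpha)h_2$ exists with $\frac{hz-\delta}{\|h\|} \ge r$, so that the recursion is well posed and the two half-radius subproblems genuinely certify the separation of $B(z,r)$ from $P$. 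For all of these estimates we refer to Section~3 of~\cite{chubanov}.
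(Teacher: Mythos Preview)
The paper does not actually prove this proposition: it is stated as Theorem~3.1 in~\cite{chubanov} and is simply quoted as a black box, with no argument given. So there is no ``paper's own proof'' to compare your attempt against; the authors defer entirely to Chubanov's original analysis.

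That said, your sketch is a reasonable outline of how the argument in~\cite{chubanov} goes: the $m^3 + m^2n + n^2m$ term does come from the one-time setup of the orthogonal projector onto $\{x : Ax = b\}$; the $K$ factor is exactly the node count of the binary recursion tree with depth $\log_{1+\theta}(r\|c_{\max}\|/\epsilon)$ and $1+\theta = 7/5$; and the per-node cost $n^2 + N + n\log(\cdot)$ correctly splits into the dense projection, the sparse constraint check $Cz$, and the coordinate-wise rounding. You are also right that the substantive difficulty is the bit-size control hidden in the logarithmic factor, and that this is where Chubanov's detailed invariants from Section~3 of~\cite{chubanov} are genuinely needed. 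Your proposal is an accurate high-level reconstruction, but since the present paper offers no proof of its own, the only honest citation here is to~\cite{chubanov}.
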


\subsection{Using the D\&C Algorithm}

One possible way to exploit the D\&C algorithm is the following. Since D\&C returns $\epsilon$-approximate solutions, we can try to run it on the system $Ax=b, Cx \leq d - \epsilon\mathbf{1}$; if the algorithm returns an $\epsilon$-approximate solution, we will have an {\em exact} solution for our original system $Ax=b, Cx \leq d$. However, we need a $z$ and an $r$ as input for D\&C. To get around this, we can appeal to some results from classical linear programming theory. Suppose one can, a priori, find a real number $r^*$ such that if the system $Ax=b,\; Cx \leq d - \epsilon\one$ is feasible, then it has a solution with norm at most $r^*$. In other words, there exists a solution in $B(0,r^*)$, if the system is feasible. Such bounds are known in the linear programming literature (see Corollary 10.2b in Schrijver~\cite{Schrijver1986}), where $r^*$ depends on the entries of $A, b, C, d$ and $\epsilon$. Then one can choose $z=0$ and $r = r^*$ for the D\&C algorithm. If the algorithm returns an $\epsilon$-approximate solution, we will have an {\em exact} solution for our original system $Ax=b, Cx \leq d$; whereas, if the algorithm returns a separating hyerplane, we know that $Ax=b, Cx \leq d - \epsilon\mathbf{1}$ is infeasible by our choice of $r$. \par

This strategy suffers from three problems.
 
\begin{enumerate}
\item There is a strange outcome in the D\&C procedure - when it ``fails" and stops. This occurs when the two recursive branches return hyperplanes with normal vectors $h_1, h_2$ with $h_1 = -\gamma h_2$ for some $\gamma > 0$. It is not clear what we can learn about the problem from this outcome. Later in this paper, we will interpret this outcome in a manner that is different from Chubanov's interpretation. 
\item It might  happen that $Ax=b, Cx \leq d - \epsilon\mathbf{1}$ is infeasible, even if the original system $Ax=b, Cx \leq d$ is feasible. In this case the algorithm may return a separating hyperplane, but we cannot get any information about our original system. All we learn is that $Ax=b, Cx \leq d - \epsilon\mathbf{1}$ is infeasible.
\item Finally, the running time of the D\&C algorithm is a polynomial in $n,m$ and $r = r^*$. Typically, the classical bounds on $r^*$ are exponential in the data. This would mean the running time of the algorithm is not polynomial in the input data. 
\end{enumerate}

Let us concentrate on tackling the first two problems, to progress towards a correct linear programming algorithm. Then we can worry about the running time. This requires a second important idea in Chubanov's work. We address the first two problems above by homogenizing, or parameterizing, our original system, and we show how this helps in the rest of this section. Geometrically this turns the original polyhedron into an unbounded polyhedron, defined by
\begin{equation} \label{eq:parsys}
\begin{array}{l}
Ax - bt = \zero, \\
Cx - dt \leq \zero, \\
-t \leq -1.
\end{array}
\end{equation}
Note this system \eqref{eq:parsys} is feasible if and only if \eqref{eq:orsys} is feasible.  Let $(x^*, t^*)$ be a solution to \eqref{eq:parsys}.  Then $\frac{x^*}{t^*}$ is a solution of \eqref{eq:orsys}.  Similarly, if $x^*$ is a solution of \eqref{eq:orsys}, then $(x^*, 1)$ is a solution of \eqref{eq:parsys}.  Thus we can apply the D\&C to a strengthened parameterized system
\begin{equation} \label{eq:strsys}
\begin{array}{l}
Ax - bt = \zero, \\
Cx - dt \leq -\epsilon \one, \\
-t \leq -1 - \epsilon
\end{array}
\end{equation}
and any $\epsilon$-approximate solution will be an exact solution of \eqref{eq:parsys}, and thus will give us an exact solution of \eqref{eq:orsys}. We still need to figure out what $z$ and $r$ to use. For the rest of the paper, we will use $\epsilon = 1$ as done by Chubanov in his paper. It turns out that if we choose the appropriate $z$ and $r$, we can get interesting information about the original system $Ax=b, Cx \leq d$ when D\&C fails, or returns a separating hyperplane. We explain this next.
\old{
Now we still have the same issues as before, that is \eqref{eq:strsys} might be infeasible even though \eqref{eq:parsys} is feasible.  However, because we are now dealing with a feasible region that is unbounded and essentially a cone, we can gain insights from both infeasible systems and systems where the D\&C fails that we could not from the non-parameterized system.  Two lemmas important from Chubanov \cite{chubanov} give us what we need. \par

\begin{lemma}[Chubanov Lemma 4.1]
Let the D\&C when applied to \eqref{eq:strsys} fail (i.e., $h_1 = -\gamma h_2$) and suppose the original system \eqref{eq:orsys} is feasible.  Let
$$h_j = a^j + \sum_{k = 1}^l \alpha_k^j(c_k, -d_k) - \beta^j (\zero, 1),$$
for $j = 1, 2$.  Then
\begin{enumerate}[(1)]
\item $\beta^1 = 0$ and $\beta^2 = 0$.
\item At least one of the coefficients $\alpha_k^j$ is nonzero.  If $\alpha_{k_0}^j > 0$ for $j=1$ or $j=2$, then $c_{k_0} x = d_{k_0}$ for all $x \in P$.
\end{enumerate}
\end{lemma}

\noindent Now assume $r^*$ is a positive number such that if $P$ is non-empty, then $P \subset B(\zero, r^*)$.

\begin{lemma}[Chubanov Lemma 4.2]
Let the D\&C algorithm return the induced hyperplane $hx = \delta$ when applied to \eqref{eq:strsys} with input parameters $(x^0, t^0) = (\zero, 0)$, $r = (2l + 3)(r^* + 1)$, and $\epsilon = 1$.
\begin{enumerate}[(1)]
\item If $\beta \geq \max_k \alpha_k$, then $\|x\| \geq r^*$ for all $x \in P$.
\item If $\alpha_{k_0} = \max_k \alpha_k$, and $\alpha_{k_0} \geq \beta$, then $d_{k_0} - c_{k_0} x \leq \frac{1}{2}$ for all $x \in P$ such that $\|x\| < r^*$.
\end{enumerate}
\end{lemma}

\noindent For full proofs and details, again see \cite{chubanov} Section 4.  \par

The purpose of the Chubanov Relaxation Algorithm \cite{chubanov} is to either 
find a real solution of  \ref{eq:orsys} or determine the system has no integer
solutions. We put together the principles we have outlined.
As a quick note, for the rest of the paper, we are going to assume $\epsilon = 1$ since we no longer care about the exact value of $\epsilon$ as we will always be finding exact solutions or showing the original system (not the $\epsilon$-approximate system) is in some way infeasible. 
}

Let us summarize before we proceed. Given a system \eqref{eq:orsys} we parameterize and then strengthen with $\epsilon = 1$ (to obtain a system in the form \eqref{eq:strsys}).\old{ we can break Chubanov's algorithm down into three different, possibly recurring, cases.  First we need to make sure $Ax = b$ is feasible, else no solution.  } Then we apply the D\&C to \eqref{eq:strsys} with appropriately chosen $z$ and $r$.  Our three possible outcomes are:

\begin{enumerate}[(I)]
\item The D\&C gives us a solution $(x^*, t^*)$ which is an $\epsilon=1$-approximate solution to \eqref{eq:strsys}. This is the best possible outcome, because we can then return the exact solution $\frac{x^*}{t^*}$ to \eqref{eq:orsys}.
\item The D\&C fails. The reader can look ahead to our Proposition~\ref{prop:eq-forced} for an interpretation of this outcome. \old{ Then we can apply Lemma 2.1.  If either of the $\beta^j$'s is nonzero, or if all the $\alpha_k^j$'s are zero, then we know the system \eqref{eq:orsys} is infeasible, and we can return the fact no \emph{integer} solutions exist.  Otherwise we know one of the inequalities is really an implied equality, and we can add $c_{k_0} x = d_{k_0}$ to $Ax = b$.  Then run the D\&C on the new system after paramaterizing and strengthening $A'x = b'$, $C'x \leq d$ where the implied inequality has been moved to the equalities.}
\item The D\&C returns a separating hyperplane $hx = \delta$. Our Proposition~\ref{prop:thin-strip} tells us how to interpret this outcome. \old{Then we can apply Lemma 2.2.  Again, either the original system has no integer solutions, or according to statement (2) of Lemma 2.2, any integer solutions must lie on $c_{k_0} x = d_{k_0}$, and so we get an implied equality which we can move to $Ax = b$.  Then run again the D\&C on the new (parameterized and strengthened) system.}
\end{enumerate}
\old{
First note this process must stop since there are only a finite number of inequalities to begin with, and only a finite number of 
inequalities we can turn into equalities.  Second, only if we determine the system is infeasible during the first iteration can we actually claim \eqref{eq:orsys} is completely infeasible.  During any other iteration we just know it cannot have any \emph{integer} solutions as outcome (III) might have turned a feasible system with no integer solutions into an infeasible system.  At the same time, just because we find a feasible solution $x^*$ of \eqref{eq:orsys}, unless $x^*$ is itself an integer solution, there is still no guarantee \eqref{eq:orsys} has any integer solutions.  Thus in and of itself, the Chubanov Relaxation Algorithm only solves the feasibility question or the integer feasibility question in very specific circumstances.  
The main strength of the algorithm lies in the fact that if the system takes the form $Ax = b$, $\zero \leq x \leq \one$, then it runs in strongly polynomial time. 
 }



In the rest of this section, we give some more geometric intuition behind the process of homogenizing the polyhedron and why it is useful. Our goal will be to prove Theorem~\ref{thm:summary}.

\subsection{Meaning of ``Failure" Outcome in D\&C}
First we show that if the Chubanov D\&C algorithm fails on a particular system, then in fact that system is infeasible. This observation is never made in the original paper by Chubanov~\cite{chubanov} and, as far as we know, is new.

\begin{prop}\label{prop:failure1}  Suppose Chubanov's D\&C Algorithm fails on the system $Ax=b, Cx \leq d$, i.e., it returns two hyperplanes $h_1x = \delta_1$ and $h_2x = \delta_2$ with $h_1 = -\gamma h_2$ with $\gamma > 0$. Then the system $Ax=b, Cx \leq d$ is infeasible.
\end{prop}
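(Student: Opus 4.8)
The plan is to analyze what the "failure" outcome actually produces. When D\&C fails, it returns hyperplanes $h_1x=\delta_1$ and $h_2x=\delta_2$ with $h_1=-\gamma h_2$ for some $\gamma>0$, and — crucially — each $h_jx=\delta_j$ was certified by the recursive call to \emph{separate a ball} $B(z_j,r_j)$ from the feasible set $P$, in the sense that $P$ lies on the side $h_jx\le\delta_j$ (more precisely $h_jx<\delta_j - \text{something}$, but $h_jx\le\delta_j$ suffices) while the relevant ball center violates it by at least $r_j$. The two centers involved are $z$ itself and $z_0$, the projection of $z$ onto $(h_1,\delta_1)$. So first I would nail down the invariant that every hyperplane returned by D\&C (at any level of the recursion) is a \emph{valid} separating hyperplane for $P$, i.e., $h_jx\le\delta_j$ holds for all $x\in P$. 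This should follow by induction on the recursion: in the base case the EP either returns a violated original constraint $c_{k_0}x\le d_{k_0}$ (valid for $P$ since $P$ satisfies $Cx\le d$) or the hyperplane $h=(z-p_{(A,b)}(z))^T$, $\delta=h\cdot p_{(A,b)}(z)$, which is valid because $P\subseteq\{Ax=b\}$ and $p_{(A,b)}(z)$ is the nearest point of that affine space to $z$, so $P$ lies in the halfspace $hx\le\delta$; in the recursive step, the returned $h=\alpha h_1+(1-\alpha)h_2$ with $\delta=\alpha\delta_1+(1-\alpha)\delta_2$ and $\alpha\in[0,1]$ is a convex combination of two valid inequalities, hence valid.

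Granting the invariant, the failure case gives two inequalities valid on $P$: $h_1x\le\delta_1$ and $h_2x\le\delta_2$ with $h_1=-\gamma h_2$, $\gamma>0$. Substituting, $-\gamma h_2 x\le\delta_1$, i.e., $h_2x\ge-\delta_1/\gamma$, so for all $x\in P$ we get $-\delta_1/\gamma\le h_2x\le\delta_2$. If $P$ is nonempty this forces $-\delta_1/\gamma\le\delta_2$, i.e., $\delta_1+\gamma\delta_2\ge0$. The contradiction I want to extract must therefore come from the \emph{strictness} of the separation: the D\&C/EP don't just return a weakly valid hyperplane, they return one that strictly separates a ball of positive radius from $P$. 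Concretely, the failure is detected at a node where the recursive calls return $h_1$ (separating $B(z,r')$ from $P$ with margin $\ge r'$) and $h_2$ (separating $B(z_0,r')$ from $P$ with margin $\ge r'$), where $z_0$ is the projection of $z$ onto $(h_1,\delta_1)$ so that $h_1z_0=\delta_1$. So I would track the quantitative margins: $\tfrac{h_1 z-\delta_1}{\|h_1\|}\ge r'$ and $\tfrac{h_2 z_0-\delta_2}{\|h_2\|}\ge r'$. Using $h_1z_0=\delta_1$ and $h_1=-\gamma h_2$, the second inequality reads $\tfrac{h_2z_0-\delta_2}{\|h_2\|}\ge r'$ while $h_2 z_0 = -\tfrac1\gamma h_1 z_0 = -\tfrac1\gamma\delta_1$. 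I then combine this with the first margin condition to derive that $h_2(\cdot)$ takes a value strictly below $-\delta_1/\gamma$ somewhere consistent with $P$ — or more directly, that the two open halfspaces $\{h_1x<\delta_1\}$ and $\{h_2x<\delta_2\}$, whose boundaries are parallel hyperplanes, actually have overlapping "far sides" in a way that leaves no room for $P$: since $h_1=-\gamma h_2$, the valid inequalities say $P\subseteq\{h_2x\le\delta_2\}\cap\{h_2x\ge-\delta_1/\gamma\}$, a slab (or empty set); the margin conditions should force this slab to be empty, giving $P=\emptyset$.

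The cleanest route, which I would actually write up, is: show the margins force $\delta_1+\gamma\delta_2<0$, directly contradicting the $\delta_1+\gamma\delta_2\ge0$ obtained above from nonemptiness of $P$. To get $\delta_1+\gamma\delta_2<0$: from $h_1z-\delta_1\ge r'\|h_1\|>0$ we have $h_1z>\delta_1$; from $h_2z_0-\delta_2\ge r'\|h_2\|>0$ and $h_2z_0=-\tfrac1\gamma\delta_1$ we get $-\tfrac1\gamma\delta_1-\delta_2>0$, i.e. $-\delta_1-\gamma\delta_2>0$, i.e. $\delta_1+\gamma\delta_2<0$. (Here I only need that the separating margin is strictly positive, which holds because $r'>0$ at every node of the recursion — the base case is reached with $r'\le\frac{\epsilon}{2\|c_{\max}\|}$ but still $r'>0$, and $r'$ only shrinks by the factor $\tfrac{1}{1+\theta}$.) Then $P\neq\emptyset$ would give $\delta_1+\gamma\delta_2\ge0$, a contradiction, so $P=\emptyset$.

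The main obstacle I anticipate is pinning down exactly which quantitative guarantee the recursive calls supply about $h_1$ and $h_2$ — in particular, confirming that "$h_jx=\delta_j$ separates $B(z_j,r')$ from $P$" is maintained with a \emph{strict} positive margin through the recursion, and that at the failure node the two balls have the specific centers $z$ and $z_0=$ (projection of $z$ onto $(h_1,\delta_1)$) with the \emph{same} radius $r'=\tfrac{1}{1+\theta}r$. This requires carefully reading off the invariant that D\&C maintains (the EP's three outcomes each give a hyperplane strictly separating $B(z,r)$ from $P$, and the convex-combination step in D\&C preserves this for the larger ball), which the excerpt states only informally; I would state and prove this as a preliminary lemma ("every hyperplane returned by D\&C on input $(z,r,\epsilon)$ satisfies $hx\le\delta$ for all $x\in P$ and $\tfrac{hz-\delta}{\|h\|}\ge r$"), and then the Proposition falls out of the two-line computation above. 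A secondary point to be careful about is the degenerate reading when $h_2=0$ or $\|h_2\|=0$, but the EP/D\&C never return a zero normal (the EP returns either a nonzero row $c_{k_0}$ of $C$ — rows are nonzero by assumption — or $z-p_{(A,b)}(z)\neq0$ since that branch requires $\|z-p_{(A,b)}(z)\|\ge r>0$; and a convex combination of two antipodal nonzero vectors is $0$ only in the excluded failure-of-failure case $h_1=-\gamma h_2$, which is precisely the case under consideration and where we take $h_1,h_2$ themselves, not their combination), so this does not cause trouble.
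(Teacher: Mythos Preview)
Your proposal is correct and follows essentially the same approach as the paper: both arguments use the invariant that each returned hyperplane is valid for $P$ and strictly separates the associated ball, then exploit $h_1z_0=\delta_1$ together with $h_1=-\gamma h_2$ and the positive margin at $z_0$ to deduce $\delta_1+\gamma\delta_2<0$, contradicting the inequality $\delta_1+\gamma\delta_2\ge 0$ that would follow from $P\neq\emptyset$. The paper packages the margin step by evaluating $h_2$ at an interior point of $B(z_0,\tfrac{r}{1+\theta})$ rather than invoking the center-margin inequality directly, but this is a cosmetic difference; your write-up is, if anything, slightly cleaner and more explicit about the underlying invariant.
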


\begin{proof}
Let $P = \{x \in \R^n \st Ax=b, Cx \leq d\}$. If Chubanov's algorithm fails, then there exists $z \in \R^n, r > 0$ such that the following two things happen :

\begin{enumerate}
\item[(i)] $h_1x \leq \delta_1$ is valid for $P$ and for all $y \in B(z, \frac{1}{1+\theta}r)$, $h_1y > \delta_1$.
\item[(i)] $h_2x \leq \delta_2$ is valid for $P$ and for all $y \in B(z_0, \frac{1}{1+\theta}r)$, $h_2y > \delta_2$, where $z_0 = z - \frac{h_1z - \delta_1}{h_1\cdot h_1}\cdot h_1$.
\end{enumerate}

Since $z_0 - \frac{r}{2(1+\theta)}\frac{h_2}{\lVert h_2\rVert} \in B(z_0, \frac{1}{1+\theta}r)$, $h_2\cdot(z_0 - \frac{r}{2(1+\theta)}\frac{h_2}{\lVert h_2\rVert}) > \delta_1$. Therefore,
\old{\begin{equation}\label{eq:h1}
h_1z - \frac{r\lVert h_1\rVert}{2(1 + \theta)} > \delta_1.
\end{equation}
Using the same argument with $B(z_0,  \frac{1}{1+\theta}r)$ and $h_2, \delta_2$, we get that }
\begin{equation}\label{eq:h2}
h_2z_0 - \frac{r\lVert h_2\rVert}{2(1 + \theta)} > \delta_2.
\end{equation} 

Now we use the fact that $h_1 = -\gamma h_2$ and so $-\frac{1}{\gamma}h_1 = h_2$ which we substitute into \eqref{eq:h2} to get $-\frac{1}{\gamma}h_1z_0 - \frac{r\lVert h_2\rVert}{2(1 + \theta)} > \delta_2$. From the definition of $z_0$, we have $h_1z_0 = \delta_1$.  Therefore, $-\frac{1}{\gamma}\delta_1  > \frac{r\lVert h_2\rVert}{2(1 + \theta)} + \delta_2 > \delta_2$. So $\delta_1 < -\gamma\delta_2$. Now $h_1x \leq \delta_1$ is valid for $P$ using (i) above. Using $h_1 = -\gamma h_2$ and $\delta_1 < -\gamma \delta_2$, we get $-\gamma h_2 x < -\gamma\delta_2$ is valid for $P$, i.e., $h_2x > \delta_2$ is valid for $P$. But we also know that $h_2x \leq \delta_2$ is valid for $P$ from (ii) above. This implies that $P = \emptyset$ and the system $Ax=b, Cx \leq d$ is infeasible.\end{proof}

We now interpret the ``failure" outcome of the D\&C algorithm on the strengthened parameterized system. First we prove the following useful lemma.

\begin{figure}
\centering
\scalebox{0.5}{\includegraphics{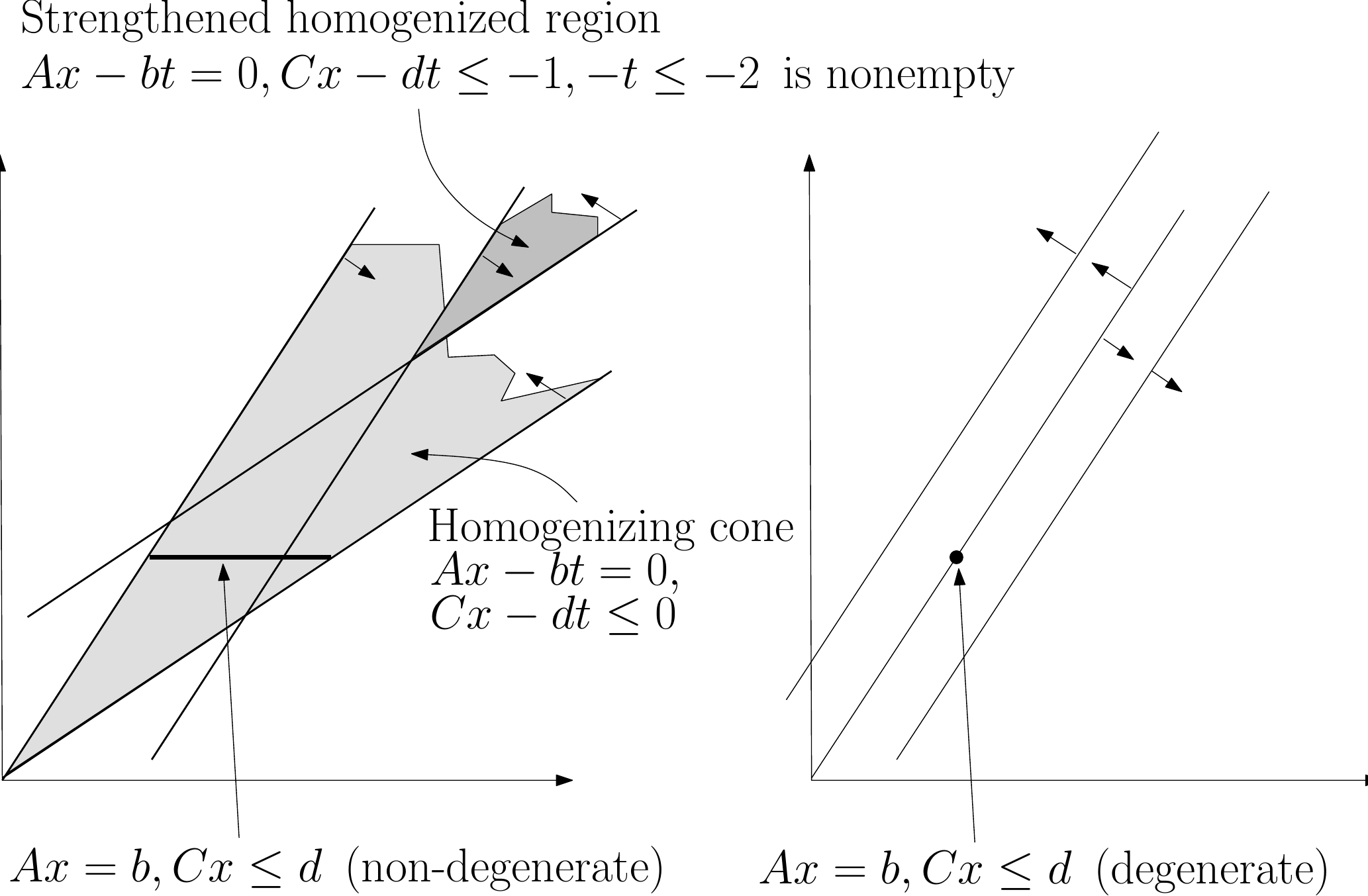}}
\caption{This Figure illustrates our Lemma~\ref{lem:equality-forcing1}. In the left figure, the original feasible region is non-degenerate and so the strengthened, homogenized cone is nonempty. The figure on the right shows an example where the original system is degenerate; the strengthened cone is empty because we have two parallel hyperplanes which get pushed away from each other, creating infeasibility in the strengthened system. }\label{fig:homogenization_1}
\end{figure}
 
\begin{lemma}\label{lem:equality-forcing1}
If the system $Ax - bt = 0, Cx - dt \leq -1, -t \leq -2$ is infeasible, then there exists $l \in \{1, \ldots, k\}$ such that $c_l\cdot x = d_l$ for all $x$ satisfying $Ax=b, Cx \leq d$.
\end{lemma}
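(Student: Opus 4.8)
The plan is to prove the contrapositive. Write $P = \{x \in \R^n \st Ax = b,\ Cx \le d\}$ and let $c_1,\dots,c_l$ be the rows of $C$. If $P = \emptyset$ the conclusion is vacuously true, so assume $P \neq \emptyset$ and suppose, for contradiction, that no inequality $c_k x \le d_k$ is an implied equality: that is, for every $k \in \{1,\dots,l\}$ there is a point $x^{(k)} \in P$ with $c_k x^{(k)} < d_k$ (the inequality is strict because $x^{(k)} \in P$ already gives $c_k x^{(k)} \le d_k$). I will turn this collection of witnesses into a feasible point of $Ax - bt = \zero,\ Cx - dt \le -\one,\ -t \le -2$, contradicting infeasibility.

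The first key step is an averaging argument producing a point of $P$ that is strictly interior to all the inequalities simultaneously. Set $\bar x = \frac1l \sum_{k=1}^l x^{(k)}$. Since $A x^{(k)} = b$ for each $k$, linearity gives $A\bar x = b$. For a fixed row $i$, separate the $k = i$ term from the rest: $c_i \bar x = \frac1l\big(c_i x^{(i)} + \sum_{k \ne i} c_i x^{(k)}\big) < \frac1l\big(d_i + (l-1)d_i\big) = d_i$, using $c_i x^{(i)} < d_i$ and $c_i x^{(k)} \le d_i$ for $k \ne i$. Hence $c_i \bar x < d_i$ for every $i$, and $A\bar x = b$.

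The second step is homogenization plus scaling. Let $\eta = \min_{1 \le i \le l}(d_i - c_i \bar x)$, which is positive by the previous step, and set $s = \max\{2,\, 1/\eta\}$ and $(\hat x, \hat t) = (s\bar x,\, s)$. Then $A\hat x - b\hat t = s(A\bar x - b) = \zero$; for each $i$, $c_i \hat x - d_i \hat t = -s(d_i - c_i \bar x) \le -s\eta \le -1$ because $s \ge 1/\eta$; and $-\hat t = -s \le -2$. Thus $(\hat x, \hat t)$ is feasible for the system in the hypothesis, the desired contradiction. I do not expect a genuine obstacle here: the one point requiring care is the existence of the \emph{strictly} feasible $\bar x$, which is exactly why one must average over witnesses for all $l$ rows rather than use a single $x^{(k)}$; everything else is the routine scaling computation. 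Conceptually this is the analytic counterpart of the right-hand picture in Figure~\ref{fig:homogenization_1}: the strengthened homogenized cone is empty precisely when the original polyhedron is degenerate in the direction of some inequality, i.e., that inequality is an implied equality.
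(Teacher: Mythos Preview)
Your proof is correct and follows essentially the same approach as the paper's: both argue by contrapositive, average the witnesses $x^{(k)}$ to obtain a strictly feasible $\bar x$, and then scale by a factor ensuring both $Cx - dt \le -\one$ and $t \ge 2$. The only cosmetic difference is that the paper takes $\eta = \min\{\tfrac12,\eta_1,\dots,\eta_l\}$ and scales by $1/\eta$, whereas you take $\eta = \min_i \eta_i$ and scale by $s = \max\{2,1/\eta\}$; these yield the same scaling factor.
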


\begin{proof}
We prove the contrapositive. So suppose that for all $k \in \{1, \ldots, l\}$, there exists $x_k$ satisfying $Ax_k=b, Cx_k \leq d$ with $c_k\cdot x_k < d_k$. Then using $\bar x = \frac{1}{l}(x_1 + \ldots + x_l)$, we get that $A\bar x = b$ and $c_k\bar x < d_k$ for all $k \in \{1, \ldots, l\}$. Let $\eta_k = d_k - c_k\cdot\bar x > 0$ and let $\eta = \min\{\frac{1}{2}, \eta_1, \ldots, \eta_l\}$. Therefore, $\eta > 0$. Let $x^* = \frac{\bar x}{\eta}$ and $t^* = \frac{1}{\eta}$. Then 
$$Ax^* - bt^* = \frac{1}{\eta}(A\bar x - b) = 0.$$
For every $k \in \{1, \ldots, l\}$, 
$$d_k t^* - c_k x^* = \frac{1}{\eta}(d_k - c_k \bar x) = \frac{\eta_k}{\eta}\geq 1.$$ 
Therefore, $c_k x^* - d_k t^* \leq -1$ for every $k \in \{1, \ldots, l\}$. Finally, since $t = \frac{1}{\eta} \geq 2$ by definition of $\eta$, we have $-t \leq -2$. \end{proof}

An illustration of the above lemma appears in Figure~\ref{fig:homogenization_1}. The following is the important conclusion one makes if the D\&C algorithm ``fails" on the strengthened parameterized system.

\begin{prop}\label{prop:eq-forced}
If Chubanov's D\&C algorithm fails on the system $Ax - bt = 0, Cx - dt \leq -1, -t \leq -2$, then there exists $l \in \{1, \ldots, k\}$ such that $c_l\cdot x = d_l$ for all $x$ satisfying $Ax=b, Cx \leq d$.
\end{prop}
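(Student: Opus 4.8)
The plan is to combine the two results that immediately precede this proposition. Proposition~\ref{prop:failure1} tells us that whenever Chubanov's D\&C algorithm \emph{fails} on a linear system, that system must be infeasible. Applying this verbatim to the system $Ax - bt = 0,\ Cx - dt \leq -1,\ -t \leq -2$ (in the variables $(x,t) \in \R^{n+1}$), a failure of the D\&C algorithm implies this parameterized, strengthened system is infeasible. Then Lemma~\ref{lem:equality-forcing1} takes exactly such an infeasibility hypothesis as input and concludes that there is an index $l$ with $c_l \cdot x = d_l$ for every $x$ satisfying $Ax = b,\ Cx \leq d$. Chaining these two gives precisely the statement of Proposition~\ref{prop:eq-forced}.

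So the proof is essentially two lines: invoke Proposition~\ref{prop:failure1} on the strengthened parameterized system to deduce it is infeasible, then invoke Lemma~\ref{lem:equality-forcing1} to obtain the implied equality. The only subtlety worth a sentence is that Proposition~\ref{prop:failure1} is stated for a generic system ``$Ax = b,\ Cx \leq d$''; one must observe that the system $Ax - bt = 0,\ Cx - dt \leq -1,\ -t \leq -2$ is itself of that generic form once we regard $(x,t)$ as the vector of unknowns, the block $\begin{pmatrix} A & -b \end{pmatrix}$ as the equality matrix, and the block $\begin{pmatrix} C & -d \\ 0 & -1 \end{pmatrix}$ together with the right-hand side $\begin{pmatrix} -\one \\ -2 \end{pmatrix}$ as the inequality part. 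Nothing in the proof of Proposition~\ref{prop:failure1} used any structural hypothesis on the system beyond it being a finite system of linear equalities and inequalities, so the argument applies directly.

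I do not expect any real obstacle here; the work has already been front-loaded into Proposition~\ref{prop:failure1} and Lemma~\ref{lem:equality-forcing1}. If anything, the one place to be careful is bookkeeping the $\theta$ and $r$ parameters: Proposition~\ref{prop:failure1} asserts the existence of a center $z$ and radius $r$ witnessing the failure, and we simply do not need to track what these are for the parameterized system — their existence is all that is used. Thus the proof reduces to: ``By Proposition~\ref{prop:failure1}, the system $Ax - bt = 0,\ Cx - dt \leq -1,\ -t \leq -2$ is infeasible. By Lemma~\ref{lem:equality-forcing1}, there exists $l \in \{1, \ldots, k\}$ such that $c_l \cdot x = d_l$ for all $x$ satisfying $Ax = b,\ Cx \leq d$.''
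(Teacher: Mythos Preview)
Your proposal is correct and matches the paper's own proof, which is the single sentence ``Follows from Proposition~\ref{prop:failure1} and Lemma~\ref{lem:equality-forcing1}.'' Your added remark that the strengthened parameterized system fits the generic form required by Proposition~\ref{prop:failure1} is a reasonable clarification, though the paper leaves it implicit.
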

\begin{proof}Follows from Proposition~\ref{prop:failure1} and Lemma~\ref{lem:equality-forcing1}.\end{proof}

\subsection{The meaning of when a separating hyperplane is returned by D\&C}

We now make the second useful observation about the strengthened parameterized system $Ax - bt = 0, Cx - dt \leq -1, -t \leq -2$. Suppose we know that all solutions to $Ax=b, Cx \leq d$ have norm at most $r^*$. Then we will show that if all solutions to the strengthened system are ``too far" from the origin, then the original system is ``very thin" (this intuition is illustrated in Figure~\ref{fig:homogenization_2}). More precisely, we show that if all solutions to the strengthened parameterized system have norm greater than $2k(r^* + 1)$, then there exists an inequality $c_lx\leq d_l$ such that all solutions to $Ax=b, Cx \leq d$ satisfy $d_l - \frac{1}{2} \leq c_l x$. That is, the original polyhedron lies in a ``thin strip" $d_l - \frac{1}{2} \leq c_l x \leq d_l$. This would then imply that all integer solutions to $Ax=b, Cx \leq d$ satisfy $c_lx = d_l$ since $c_l, d_l$ have integer entries. Here is the precise statement of this observation.

\begin{figure}
\centering
\scalebox{0.55}{\includegraphics{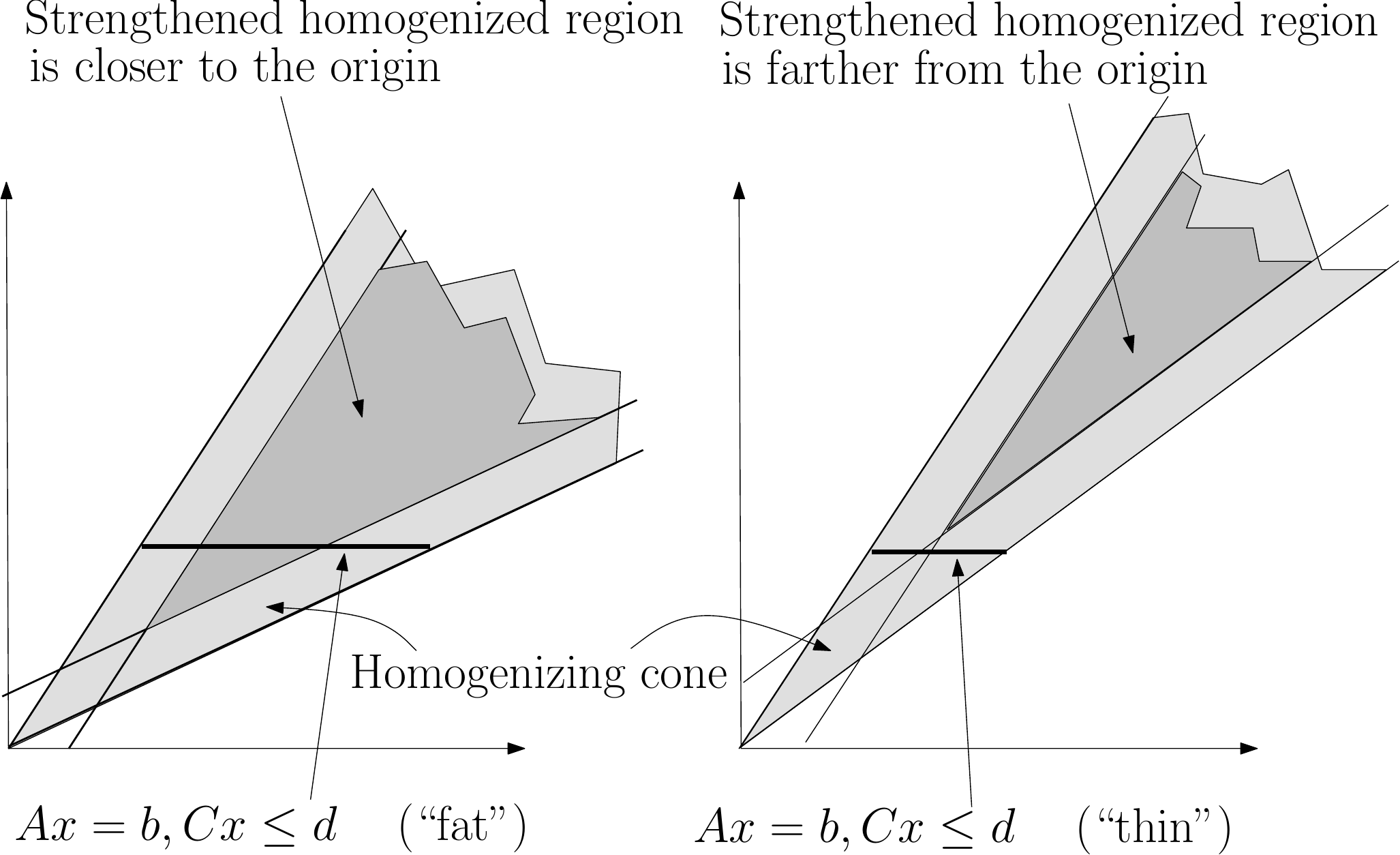}}
\caption{This Figure illustrates our Proposition~\ref{prop:thin-strip}. When the original feasible region is ``thinner'', the strengthened homogenized cone is pushed farther away from the origin. }\label{fig:homogenization_2}
\end{figure}

\begin{prop}\label{prop:thin-strip}
Suppose $r^* \in \R$ is such that $\lVert x \rVert \leq r^*$ for all $x$ satisfying $Ax = b, Cx \leq d$. If $\lVert (x,t) \rVert > 2k(r^* + 1)$ for all $(x,t)$ satisfying $Ax - bt = 0, Cx - dt \leq -1, -t \leq -2$, then there exists $l \in \{1, \ldots, k\}$ such that $d_l - \frac{1}{2} \leq c_l x \leq d_l$ for all $x$ satisfying $Ax = b, Cx \leq d$.
\end{prop}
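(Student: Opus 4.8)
The plan is to prove the contrapositive, reusing the homogenization/averaging construction from the proof of Lemma~\ref{lem:equality-forcing1}, but this time keeping careful track of the norm of the point we build. Since $c_l x \leq d_l$ holds automatically for every $x$ with $Cx \leq d$, negating the conclusion amounts to assuming that for each $l \in \{1,\ldots,k\}$ there is a point $x_l$ with $Ax_l = b$, $Cx_l \leq d$, and $c_l x_l < d_l - \frac12$. In particular $P \neq \emptyset$, and each such $x_l$ satisfies $\|x_l\| \leq r^*$.

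\noindent Next I would form the average $\bar x = \frac1k (x_1 + \cdots + x_k)$. Then $A\bar x = b$, and averaging the inequalities $c_l x_j \leq d_l$ (for $j \neq l$) together with $c_l x_l < d_l - \frac12$ gives $c_l \bar x < d_l - \frac{1}{2k}$ for every $l$; so $\bar x$ is strictly feasible with slack at least $\frac{1}{2k}$ in every inequality of $Cx\le d$. Also $\|\bar x\| \leq r^*$ by convexity of the ball $B(\zero, r^*)$.

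\noindent Now, exactly as in Lemma~\ref{lem:equality-forcing1}, set $\eta = \min\{\tfrac12,\; d_1 - c_1\bar x,\; \ldots,\; d_k - c_k\bar x\}$, so that $\eta > \frac{1}{2k}$ (using $k \geq 1$), and put $x^* = \bar x/\eta$, $t^* = 1/\eta$. A direct check shows $(x^*,t^*)$ solves the strengthened parameterized system: $Ax^* - bt^* = \frac1\eta(A\bar x - b) = 0$; for each $l$ we have $d_l t^* - c_l x^* = \frac1\eta(d_l - c_l\bar x) \geq 1$ by the choice of $\eta$, i.e.\ $c_l x^* - d_l t^* \leq -1$; and $t^* = 1/\eta \geq 2$. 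Finally, bound the norm: $\|(x^*,t^*)\| = \frac1\eta\sqrt{\|\bar x\|^2 + 1} \leq \frac1\eta\sqrt{(r^*)^2+1} \leq \frac{r^*+1}{\eta} < 2k(r^*+1)$, which contradicts the hypothesis that every solution of the strengthened system has norm greater than $2k(r^*+1)$. Hence some inequality $c_l x \leq d_l$ must be "tight to within $\frac12$" on all of $P$, as claimed.

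\noindent I do not expect a real obstacle here; the argument is essentially routine once the averaging is set up correctly. The one point that needs care is extracting precisely the factor $2k$: this is why the averaging is taken over all $k$ witnesses $x_1,\ldots,x_k$ simultaneously — this dilutes the $\frac12$-slack to $\frac{1}{2k}$ and forces $1/\eta < 2k$ — rather than using a single witness, and why the crude estimate $\sqrt{(r^*)^2+1} \leq r^*+1$ is all one needs for the $(r^*+1)$ factor.
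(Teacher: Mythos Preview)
Your argument is correct and follows the same contrapositive strategy as the paper: assume each inequality has a witness with slack exceeding $\tfrac12$, combine the witnesses into a single feasible point of the strengthened homogenized system, and bound its norm by $2k(r^*+1)$. The paper's construction is slightly more direct than yours: rather than averaging to $\bar x$ and then scaling by the data-dependent factor $1/\eta$, it simply sets $(\hat x,\hat t)=\bigl(\sum_{j=1}^k 2x^j,\,2k\bigr)$ and checks the constraints by summing $c_l(2x^j)-2d_l\le 0$ (with $\le -1$ when $j=l$); this avoids introducing $\eta$ altogether and fixes $\hat t=2k$ from the outset. One small nitpick: your claim $\eta>\tfrac{1}{2k}$ can fail when $k=1$ and the minimum is attained at $\tfrac12$, giving only $\eta\ge\tfrac{1}{2k}$; but since the hypothesis is the strict inequality $\|(x,t)\|>2k(r^*+1)$, the weak bound $\|(x^*,t^*)\|\le 2k(r^*+1)$ already yields the contradiction, so this is harmless.
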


\begin{proof} Suppose to the contrary that for all $j \in \{1, \ldots, k\}$, there exists $x^j$ such that $Ax^j = b, Cx^j \leq d$, and $c_jx^j \leq d^j - \frac{1}{2}$, i.e., $c_j(2x^j) - 2d_j \leq -1$. This implies that the following equations hold for all $j \in \{1, \ldots, k\} $\begin{equation}\label{eq:eqs} \begin{array}{l} c_j(2x^j) - 2d_j \leq -1, \\ c_j(2x^i) - 2d_j \leq 0 \qquad \forall i \neq j.\end{array}    \end{equation}  Now consider $\hat x = \sum_{j=1}^k 2x^j$ and $\hat t = 2k$. It is easily verified $A\hat x - b\hat t = 0$ since $Ax^j = b$ for all $j \in \{1, \ldots, k\}$. Moreover, adding together the inequalities in \eqref{eq:eqs}, we get that $c_j\hat x - d_j\hat t \leq -1$ for all $j \in \{1, \ldots, k\}$.  Therefore, $(\hat x, \hat t)$ satisfies the constraints $Ax - bt = 0, Cx - dt \leq -1, -t \leq -2$. 

Finally, $\lVert (\hat x, \hat t) \rVert \leq \lVert \hat x\rVert + 2k \leq \sum_{j=1}^k 2\lVert x^j\rVert + 2k \leq 2k(r^* + 1)$, where the last inequality follows from the fact that all solutions to $Ax = b, Cx \leq d$ have norm at most $r^*$. We have thus reached a contradiction with the hypothesis that $\lVert (x,t) \rVert > 2k(r^* + 1)$ for all $(x,t)$ satisfying $Ax - bt = 0, Cx - dt \leq -1, -t \leq -2$.\end{proof}

The above proposition shows that if Chubanov's D\&C algorithm returns a separating hyperplane separating the feasible region of $Ax - bt = 0, Cx - dt \leq -1, -t \leq -2$ from the ball $B(0, 2k(r^*+1))$, one can infer that there exists an inequality $c_l x \leq d_l$ that is satisfied at equality by all integer solutions to $Ax=b, Cx \leq d$. 

We now have all the tools to prove Theorem~\ref{thm:summary}.

\begin{proof}[Proof of Theorem~\ref{thm:summary}] 
As discussed earlier, we can assume that there exists $r^*$ such that $\lVert x \rVert \leq r^*$ for all $x$ satisfying $Ax = b, Cx \leq d$. We then run the D\&C algorithm on the strengthened system~\eqref{eq:strsys} with $\epsilon=1$, $z=0$ and $r=2k(r^*+1)$. Recall the three possible outcomes of this algorithm. In the first outcome, we can find an exact solution of $Ax=b, Cx \leq d$ - this is (i) in the statement of the theorem. In the second outcome, when D\&C fails, Proposition~\ref{prop:eq-forced} tells that we have an implied equality, which is (ii) in the statement of the theorem. Finally, in the third outcome, D\&C returns a separating hyperplane. By Proposition~\ref{prop:thin-strip}, we know that there exists an inequality $c_l x \leq d_l$ that is satisfied at equality by all integer solutions to $Ax=b, Cx \leq d$. This is (iii) in the statement of the theorem.
\end{proof}

\subsection{Chubanov's proof of Theorem~\ref{thm:0-1sol}}
Chubanov is able to convert the existential results of Propositions~\ref{prop:eq-forced} and \ref{prop:thin-strip} into constructive procedures, wherein he can find the corresponding implied equalities in strongly polynomial time. He then proceeds to apply the D\&C procedure iteratively and reduce the number of inequalities by one at every iteration. One needs at most $l$ such iterations and at the end one is left with a system of equations. This system can be tested for feasibility in strongly polynomial time by standard linear algebraic procedures. This is the main idea behind the Chubanov Relaxation algorithm and the proof of Theorem~\ref{thm:0-1sol} that appears in~\cite{chubanov}.

\old{Our goal now is to take Chubanov's Relaxation Algorithm as the foundation to prove some new results, and develop a new feasibility algorithm for general LPs, one that does not rely on using Lemma 2.2 part (2) to make statements only about integer solutions.
}

\section{Linear feasibility problems with strictly feasible solutions} \label{strictlps}

We will now demonstrate that using the lemmas we proved in Section~\ref{basicchubanov}, we can actually give strongly polynomial time algorithms for certain classes of linear feasibility problems. More concretely, we aim to prove Theorem~\ref{thm:totally-unimodular}. Consider a linear feability problem in the following standard form.

\begin{equation} \label{eq:or-sys}
\begin{array}{l}
Ax = b, \\
-x \leq \textbf{0},
\end{array}
\end{equation}

where $A \in \R^{m \times n}$ and $b \in \R^m$. We will assume that the entries of $A$ and $b$ are integers and that $A$ has full row rank. Let $$\Delta_A = \max\{|det(B)| \st B \textrm{ is an } n\times n \textrm{ submatrix of }A\}$$ be the maximum subdeterminant of the matrix $A$. Let $P(A,b) = \{x \in \R^n \st Ax=b, -x \leq \textbf{0}\}$ denote the feasible region of \eqref{eq:or-sys}.

\begin{lemma}\label{claim:vertex_den}
Let $x$ be any vertex of $P(A,b)$. If $x_i > 0$ for some $i\in \{1, \ldots, n\}$, then $x_i \geq \frac{1}{\Delta_A}$.
\end{lemma}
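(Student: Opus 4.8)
The plan is to use Cramer's rule on the linear system that defines the vertex $x$. Since $x$ is a vertex of $P(A,b) = \{x : Ax = b,\ -x \le \mathbf{0}\}$, it is the unique solution of a subsystem consisting of the $m$ equations $Ax = b$ together with $n - m$ of the tight nonnegativity constraints $x_j = 0$ (using that $A$ has full row rank $m$, so a vertex is determined by $n$ linearly independent tight constraints, $m$ of which we can take to be the rows of $A$). Let $N$ be the set of indices $j$ with $x_j = 0$ in this chosen basis, and let $S = \{1,\ldots,n\} \setminus N$ be the remaining (at most $m$, in fact exactly $m$) coordinates; then $x_S$ is the unique solution of the square system $A_S x_S = b$, where $A_S$ is the $m \times |S|$ submatrix of columns in $S$, which is nonsingular.

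First I would observe that if $x_i > 0$ then necessarily $i \in S$. By Cramer's rule applied to $A_S x_S = b$, we have $x_i = \frac{\det(A_S^{(i)})}{\det(A_S)}$, where $A_S^{(i)}$ is $A_S$ with its $i$-th column replaced by $b$. The denominator $\det(A_S)$ is, up to sign, an $m \times m$ (equivalently $n \times n$ after accounting for the trivial rows/columns from the tight $x_j = 0$ constraints — one can phrase the whole vertex system as an $n \times n$ determinant that reduces to $\pm\det(A_S)$) subdeterminant of $A$, so $|\det(A_S)| \le \Delta_A$. Since the entries of $A$ and $b$ are integers, $\det(A_S)$ and $\det(A_S^{(i)})$ are integers; and because $x_i > 0$ we have $\det(A_S^{(i)}) \ne 0$, hence $|\det(A_S^{(i)})| \ge 1$. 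Therefore $x_i = \frac{|\det(A_S^{(i)})|}{|\det(A_S)|} \ge \frac{1}{\Delta_A}$.

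The one point that needs a little care — and which I expect to be the main (minor) obstacle — is the bookkeeping that translates "vertex of $P(A,b)$" into the clean square nonsingular system $A_S x_S = b$, and the verification that $|\det(A_S)|$ genuinely is bounded by $\Delta_A$ as defined (a subdeterminant of an $n \times n$ submatrix of $A$). This is essentially the standard fact that basic feasible solutions of $\{Ax = b,\ x \ge 0\}$ are supported on a set of linearly independent columns of $A$; one should note that $A$ having full row rank guarantees $|S| = m$ and $A_S$ invertible, and that padding $A_S$ with the identity block coming from the coordinates in $N$ exhibits $\det(A_S)$ (up to sign) as an $n \times n$ subdeterminant of the constraint matrix — but one must be careful that the definition of $\Delta_A$ refers to submatrices of $A$ itself, so it is cleanest to just argue directly that any $m \times m$ nonsingular submatrix of $A$ extends (by choosing the complementary $n-m$ coordinates) to data covered by $\Delta_A$, or simply to reduce to the case $|S| \le n$ and note $\det(A_S)$ is an $|S| \times |S|$ subdeterminant hence at most $\Delta_A$ in absolute value when $|S| = n$, handling smaller $|S|$ by the same padding trick. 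Once this is pinned down, the Cramer's-rule estimate is immediate and the proof is complete.
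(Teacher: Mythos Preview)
Your approach is essentially the same as the paper's: both identify the vertex with a basic feasible solution $x_B = B^{-1}b$ for some nonsingular square submatrix $B$ of $A$, and then use integrality of $A,b$ to bound each positive component below by $1/|\det(B)| \ge 1/\Delta_A$; your use of Cramer's rule is just an explicit rendering of the paper's one-line appeal to $B^{-1}b$ and integrality. Your extended worry about whether $|\det(A_S)| \le \Delta_A$ stems from a notational quirk in the paper's definition of $\Delta_A$ (the submatrices should be $m \times m$, matching the basis $B$ in the paper's own proof), not from any real gap in your argument.
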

\begin{proof}
Since $x$ is a vertex, there exists a nonsingular $n \times n$ submatrix $B$ of $A$ such that $x$ is the basic feasible solution corresponding to the basis $B$. That is, the non basic variables are $0$ and the basic variable values are given in the vector $B^{-1}b$. If for some $i$, $x_i > 0$ then this is a basic variable and therefore its value is at least $\frac{1}{|det(B)|}$ since $b$ is an integer vector. Since $|det(B)| \leq \Delta_A$, we have that $x_i \geq \frac{1}{\Delta_A}$.
\end{proof}

\begin{lemma}\label{lemma:strictly-feas}
Suppose we know that \eqref{eq:or-sys} has a strictly feasible solution, i.e. there exists $\bar x \in \R^n$ such that $A\bar x = b$ and $\bar x_i > 0$ for all $i \in \{1, \ldots, n\}$. If $P(A,b)$ is bounded, then there exists a solution $x^* \in \R^n$ such that $Ax^* = b$ and $x^*_i \geq \frac{1}{n\Delta_A}$ for all $i \in \{1, \ldots, n\}$.
\end{lemma}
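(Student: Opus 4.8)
The plan is to combine the strictly feasible solution $\bar x$ with a vertex of $P(A,b)$ to produce a point that is both feasible and bounded away from the boundary in every coordinate. First I would use the hypothesis that $P(A,b)$ is bounded to conclude that $P(A,b)$ has at least one vertex (it is nonempty, since $\bar x$ witnesses feasibility, and a nonempty bounded polyhedron has a vertex); call it $v$. By Lemma~\ref{claim:vertex_den}, every positive coordinate of $v$ is at least $\frac{1}{\Delta_A}$; coordinates of $v$ that are zero are the problematic ones, and the role of $\bar x$ is precisely to push those away from $0$.

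The key step is to take a convex combination $x^* = (1-\mu)v + \mu \bar x$ for a suitably chosen $\mu \in (0,1)$. Since both $v$ and $\bar x$ satisfy $Ax = b$, so does $x^*$; since both lie in the nonnegative orthant, so does $x^*$. Now I estimate $x^*_i$ coordinatewise. If $v_i > 0$, then $x^*_i \geq (1-\mu) v_i \geq (1-\mu)\frac{1}{\Delta_A}$ by Lemma~\ref{claim:vertex_den}. If $v_i = 0$, then $x^*_i = \mu \bar x_i$, and here I need a lower bound on $\bar x_i$ — but $\bar x$ is an arbitrary strictly feasible point, so $\bar x_i$ could be tiny. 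The way around this is to note that $P(A,b)$ being bounded and containing the strictly feasible $\bar x$ means the relative interior point can be taken with controlled coordinates, or more cleanly: I would instead let $\bar x$ range so that $\mu \bar x_i$ is comparable to $\frac{1}{\Delta_A}$. The cleanest route is to first replace $\bar x$ by a point whose coordinates are all at least $\frac{1}{\Delta_A}$; this can be done by averaging the vertices obtained from perturbing, or simply by observing the following: since there is a strictly feasible solution, for each $i$ the linear program $\max\{x_i : Ax=b, x\ge 0\}$ has positive optimal value, attained at a vertex $v^{(i)}$ with $v^{(i)}_i \geq \frac{1}{\Delta_A}$ by Lemma~\ref{claim:vertex_den}. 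Averaging, $x^* = \frac{1}{n}\sum_{i=1}^n v^{(i)}$ satisfies $Ax^* = b$, $x^* \ge 0$, and $x^*_i \geq \frac{1}{n} v^{(i)}_i \geq \frac{1}{n\Delta_A}$ for every $i$.

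So the order of the argument is: (1) use boundedness plus the existence of a strictly feasible point to argue that, for each coordinate $i$, $\max\{x_i : x \in P(A,b)\} > 0$ (strict feasibility gives a point with $x_i > 0$) and is attained at a vertex $v^{(i)}$ of $P(A,b)$ (boundedness); (2) apply Lemma~\ref{claim:vertex_den} to each $v^{(i)}$ to get $v^{(i)}_i \geq \frac{1}{\Delta_A}$; (3) set $x^* = \frac{1}{n}\sum_i v^{(i)}$ and check $Ax^* = b$, nonnegativity, and the coordinatewise bound $x^*_i \geq \frac{1}{n\Delta_A}$. The main obstacle is step (1): one must be careful that $\max\{x_i : x \in P(A,b)\}$ is genuinely positive for every $i$ — this is exactly where the strict feasibility hypothesis is used, since without it some coordinate might be identically zero on $P(A,b)$ — and that the maximum is attained at a vertex, which is where boundedness is used. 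Everything after that is routine convexity bookkeeping.
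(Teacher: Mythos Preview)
Your final argument---for each coordinate $i$, find a vertex $v^{(i)}$ of $P(A,b)$ with $v^{(i)}_i>0$, invoke Lemma~\ref{claim:vertex_den} to get $v^{(i)}_i\geq\frac{1}{\Delta_A}$, and then average $x^*=\frac{1}{n}\sum_i v^{(i)}$---is exactly the paper's proof. The only cosmetic difference is that you locate $v^{(i)}$ by maximizing $x_i$ over $P(A,b)$, whereas the paper just asserts such a vertex exists (since otherwise every vertex, hence every point of the polytope, would have $i$-th coordinate zero, contradicting strict feasibility); your route via the LP is a perfectly good way to make that existence explicit.
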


\begin{proof}
Since we have a strictly feasible solution and $P(A,b)$ is a polytope, then for every $i \in \{1, \ldots, n\}$ there exists a vertex $\bar x^i$ of $P$ such that $\bar x^i_i > 0$. By Lemma~\ref{claim:vertex_den}, we have that $\bar x^i_i \geq \frac{1}{\Delta_A}$. Therefore, if we consider the solution $$x^*= \frac{1}{n}\sum_{i=1}^n\bar x^i,$$ i.e., the convex hull of all these $n$ vertices, then $x^*_i \geq \frac{1}{n\Delta_A}$ for all $i \in \{1, \ldots, n\}$.
\end{proof}

We now consider linear feasibility problems of the form \eqref{eq:or-sys} such that either it is infeasible or has a strictly feasible solution. We will present an algorithm to decide if such linear feasibility problems are feasible or infeasible. We call this algorithm the {\bf LFS Algorithm}, as an acronym for {\bf L}inear {\bf F}easibility problems with {\bf S}trict solutions. As part of the input, we will take $\Delta_A$, as well as a real number $r$ such that $P(A,b)\subset B(0, r)$, i.e., $\lVert x \rVert < r$ for all feasible solutions $x$. The running time of our algorithm will depend on $\Delta_A, r$, and $n$.

\begin{alg} {\small THE \textbf{LFS ALGORITHM}} \\
\underline{\emph{Input:}} $Ax = b$, $ -x \leq \textbf{0}$, such that either this system is infeasible, or there exists a \emph{strictly} feasible solution. We are also given a real number $r$ such that $P(A,b)\subset B(0, r)$, i.e., $\lVert x \rVert < r$ for all feasible solutions $x$. Moreover, we are given $\Delta_A$ as part of the input.\\
\underline{\emph{Output:}} A feasible point $x^*$ or the decision that the system is infeasible. \\
\textbf{Parameterize} \eqref{eq:or-sys}:
\begin{equation} \label{eq:dlp2-1}
\begin{array}{l}
Ax - bt = 0, \\
-x \leq 0,\\
-t \leq -1.
\end{array}
\end{equation}
\textbf{Run} Chubanov's D\&C subroutine on the strengthened version of \eqref{eq:dlp2-1}:
\begin{equation} \label{eq:dlp2-2}
\begin{array}{l}
Ax - bt = 0, \\
-x \leq -1, \\
-t \leq -2.
\end{array}
\end{equation}
with $z = \textbf{0}$, $\hat r = 2n\Delta_A \sqrt{r^2 + 1}$, and $\epsilon = 1$ \\
\textbf{If} Chubanov's D\&C subroutine finds an $\epsilon$-feasible solution $(x^*, t^*)$ \\
\hspace*{.2in} \textbf{Return} $\hat{x} = \frac{x^*}{t^*}$ as a feasible solution for \eqref{eq:or-sys}.\\
\textbf{Else} (Chubanov's D\&C subroutine fails or returns a separating hyperplane) \\
\hspace*{.2in} \textbf{Return} ``The system \eqref{eq:or-sys} is INFEASIBLE''
\end{alg}

\begin{thm}\label{thm:LFS-analysis}
The LFS Algorithm correctly determines a feasible point $x^*$ of \eqref{eq:or-sys} or determines the system is infeasible. The running time is $$O \left( m^3 + m^2n + n^2m + (2n\Delta_A \sqrt{r^2 + 1})^{\frac{1}{\log_2\left(\frac{7}{5}\right)}}\left(n^2+n\log\Delta_A + n\log(r)\right)\right)$$
\end{thm}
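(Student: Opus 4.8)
\textbf{Correctness.} The plan is to split into the feasible and infeasible cases for \eqref{eq:or-sys} and to use the structural results already established. First suppose \eqref{eq:or-sys} is feasible. By hypothesis it then has a strictly feasible solution, and since $P(A,b) \subset B(0,r)$ is bounded, Lemma~\ref{lemma:strictly-feas} gives a solution $x^*$ with $x^*_i \geq \frac{1}{n\Delta_A}$ for all $i$. I would then scale: setting $t^* = n\Delta_A$ and $\tilde x = n\Delta_A \cdot x^*$, the pair $(\tilde x, t^*)$ satisfies $A\tilde x - b t^* = 0$, $-\tilde x \leq -\mathbf{1}$, and $-t^* \leq -2$ (using $n\Delta_A \geq 2$, which holds after a trivial preprocessing or because $\Delta_A \geq 1$ and $n \geq 2$; the $n=1$ case is handled directly). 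Moreover $\|(\tilde x, t^*)\| \leq n\Delta_A\|x^*\| + n\Delta_A \leq n\Delta_A\sqrt{r^2+1} + \ldots$; here one checks that a point of norm at most $\hat r = 2n\Delta_A\sqrt{r^2+1}$ exists in the feasible region of \eqref{eq:dlp2-2}, so the D\&C algorithm run with $z=0$, that radius $\hat r$, and $\epsilon=1$ cannot return a separating hyperplane (it would contradict validity of the hyperplane for the feasible region, which lies inside $B(0,\hat r)$) and cannot fail (Proposition~\ref{prop:failure1} would force \eqref{eq:dlp2-2} infeasible, contradicting what we just constructed). Hence it returns an $\epsilon$-approximate solution $(x^*,t^*)$ of \eqref{eq:dlp2-2}, which is an exact solution of the parameterized system \eqref{eq:dlp2-1}, and therefore $\hat x = x^*/t^*$ solves \eqref{eq:or-sys}; the algorithm outputs it correctly.

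Conversely, suppose \eqref{eq:or-sys} is infeasible. Then the parameterized system \eqref{eq:dlp2-1}, and a fortiori the strengthened system \eqref{eq:dlp2-2}, is infeasible, so D\&C cannot return an $\epsilon$-approximate solution (an $\epsilon$-approximate solution of \eqref{eq:dlp2-2} is an exact solution of \eqref{eq:dlp2-1}, impossible). Hence it either fails or returns a separating hyperplane, and in both branches the algorithm correctly declares \eqref{eq:or-sys} infeasible. This establishes correctness.

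\textbf{Running time.} This is a direct specialization of Proposition~\ref{prop:run-time}. I would instantiate the parameters there for the call on \eqref{eq:dlp2-2}: the constraint matrix $C$ consists of the $n$ rows $-e_i$ and the row $-e_t$, so $\|c_{\max}\| = 1$, the number of nonzero entries is $N = O(n)$, and $m$ (number of equality rows) is the row count of $A$. The center is $z=0$, so one may take $\rho = 1$. With $\epsilon = 1$ and $r = \hat r = 2n\Delta_A\sqrt{r^2+1}$, the quantity $K = (r\|c_{\max}\|/\epsilon)^{1/\log_2(7/5)} = (2n\Delta_A\sqrt{r^2+1})^{1/\log_2(7/5)}$, and $\mu = \frac{2}{28n}$. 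Substituting into \eqref{eq:run-time}, the logarithmic factor $\log\frac{\rho + \log(K)(r + n\mu)}{\mu}$ becomes $O(\log(n\Delta_A r)) = O(n^2 + \log\Delta_A + \log r)$ after crude bounding, and multiplying $K$ by $(n \cdot O(\log(n\Delta_A r)) + n^2 + O(n))$ gives $K\cdot O(n^2 + n\log\Delta_A + n\log r)$. Adding the linear-algebra overhead $O(m^3 + m^2n + n^2m)$ (projection onto $Ax=b$, carried over verbatim from Proposition~\ref{prop:run-time}) yields the stated bound.

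\textbf{Main obstacle.} The routine but slightly delicate point is the verification that, when \eqref{eq:or-sys} is feasible, the scaled point $(\tilde x, t^*)$ genuinely lies in $B(0,\hat r)$ with the right constant, so that the separating-hyperplane and failure outcomes are provably excluded; this is where the factor $2n\Delta_A\sqrt{r^2+1}$ must be chosen to dominate $n\Delta_A\|x^*\| + n\Delta_A \leq n\Delta_A r + n\Delta_A$, and one should double-check the edge cases ($\Delta_A$ or $n$ small) where $-t \leq -2$ might not be automatically implied by $t = n\Delta_A$. Everything else is bookkeeping: plugging known parameter values into Propositions~\ref{prop:run-time}, \ref{prop:failure1}, and Lemma~\ref{lemma:strictly-feas}.
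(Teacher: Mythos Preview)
Your proof is correct and follows essentially the same approach as the paper: the same scaled-point construction via Lemma~\ref{lemma:strictly-feas} to rule out the separating-hyperplane and failure outcomes, and the same specialization of Proposition~\ref{prop:run-time} for the running time. The only cosmetic differences are that the paper organizes correctness by the three D\&C outcomes rather than by feasibility of \eqref{eq:or-sys}, and scales by $2n\Delta_A$ instead of $n\Delta_A$ (so that $-t\leq -2$ holds automatically without the edge case you flag).
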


\begin{proof}
We first confirm the running time. We will use Proposition~\ref{prop:run-time}. Observe that for our input to the D\&C algorithm, $\lVert c_{max}\rVert = 1$, $\epsilon = 1$ and $\hat r = 2n\Delta_A \sqrt{r^2 + 1}$ and $N = n$. Therefore, $K = (2n\Delta_A \sqrt{r^2 + 1})^{\frac{1}{\log_2\left(\frac{7}{5}\right)}}$, $\rho = 0$ since we use the origin as the initial center for the D\&C algorithm, $\mu = \frac{1}{14n}$. Substituting this into \eqref{eq:run-time}, we get the stated running time for the LFS algorithm.

\old{Lemma 3.1 in \cite{chubanov} proves the algorithm finishes in
$$O \left(\hat{r}^{\frac{1}{\log_2\left(\frac{7}{5}\right)}} \right)$$
iterations since $c_{\max} = 1$ and $\epsilon = 1$. 
}
\bigskip
To prove the correctness of the algorithm, we need to look at each of the three cases Chubanov's D\&C can return.

\noindent CASE 1:  An $\epsilon$-approximate solution $(x^*,t^*)$ is found for \eqref{eq:dlp3}.  Then $(x^*,t^*)$ is an exact solution to \eqref{eq:dlp2-1}, and $\frac{x^*}{t^*}$ is a solution to \eqref{eq:or-sys}.

\noindent CASE 2:  The D\&C algorithm fails to complete, returning consecutive separating hyperplanes $(h_1,\delta_1)$ and $(h_2, \delta_2)$ such that $h_1 = -\gamma h_2$ for some $\gamma > 0$. Then Proposition~\ref{prop:eq-forced} says that there exists $i \in \{1, \ldots, n\}$ such that $x_i =0$ for all feasible solutions to \eqref{eq:or-sys}. But then the original system \eqref{eq:or-sys} has no strictly feasible solution, and by our assumption, is therefore actually infeasible.  Therefore if the D\&C fails, our original system $Ax = b$, $-x \leq \textbf{0}$ is infeasible. \\
CASE 3:  The final case is when the D\&C algorithm returns a separating hyperplane $(h,\delta)$, separating $B(0, \hat r)$ from the feasible set of \eqref{eq:dlp2-2}. We now show that this implies the original system $Ax = b$, $-x \leq \textbf{0}$ is infeasible. 

If not, then from our assumption, we have a strictly feasible solution $x^*$ for \eqref{eq:or-sys}. By Lemma~\ref{lemma:strictly-feas}, we know that $x^*_i \geq \frac{1}{n\Delta_A}$ for all $i \in \{1, \ldots, n\}$. Consider the point $(\bar x, \bar t)=(2n\Delta_Ax^*, 2n\Delta_A)$ in $\R^{n+1}$. We show that $(\bar x, \bar t)$ is feasible to \eqref{eq:dlp2-2}. Since $Ax^* = b$, we have that $A\bar x - b\bar t = 0$. Moreover, since $x^*_i \geq \frac{1}{n\Delta_A}$ for all $i \in \{1, \ldots, n\}$, we have that $\bar x = 2n\Delta_Ax^* \geq \textbf{1}$, i.e., $-\bar x \leq -1$. Finally, $\bar t = 2n\Delta_A \geq 2$, since $\Delta_A \geq 1$ and $n \geq 1$. Therefore, $-\bar t \leq -2$. Now we check the norm of this point $\lVert (\bar x, \bar t)\rVert = \lVert (2n\Delta_Ax^*, 2n\Delta_A)\rVert = 2n\Delta_A\lVert (x^*, 1)\rVert$. Since $x^* \in P(A,b) \subset B(0,r)$, we know that $\lVert x^* \rVert < r$. Therefore, $\lVert (\bar x, \bar t)\rVert < \hat r$. So $(\bar x, \bar t)$ is a feasible solution to \eqref{eq:dlp2-2} and $(\bar x, \bar t) \in B(0, \hat r)$. But D\&C returned a separating hyperplane separating $B(0,\hat r)$ from the feasible set of \eqref{eq:dlp2-2}. This is a contradiction. Hence, we conclude that $Ax = b$, $-x \leq \textbf{0}$ is infeasible. \end{proof}

\begin{cor}\label{cor:bounded-lp}
Consider the following system.
\begin{equation} \label{eq:bddx}
\begin{array}{l}
Ax = b, \\
\textbf{0} \leq x \leq \lambda \textbf{1}.
\end{array}
\end{equation}
Suppose that we know that the above system is either infeasible, or has a strictly feasible solution, i.e., there exists $\bar x$ such that $A\bar x = b$ and $\textbf{0} < \bar x < \lambda\textbf{1}$. Then there exists an algorithm which either returns a feasible solution to \eqref{eq:bddx}, or correctly decides that the system is infeasible, with running time $$O \left(m^3 + m^2n + n^2m + (2n\Delta_A \lambda\sqrt{2n + 1})^{\frac{1}{\log_2\left(\frac{7}{5}\right)}}\left(n^2 + n\log\Delta_A + n\log(\lambda)\right)\right).$$
\end{cor}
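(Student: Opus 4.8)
The plan is to reduce Corollary~\ref{cor:bounded-lp} to Theorem~\ref{thm:LFS-analysis} by putting the box-constrained system~\eqref{eq:bddx} into the standard form~\eqref{eq:or-sys}. First I would introduce slack variables: rewrite $\mathbf{0} \leq x \leq \lambda\mathbf{1}$ as $x \geq \mathbf{0}$, $s \geq \mathbf{0}$, $x + s = \lambda\mathbf{1}$, so that~\eqref{eq:bddx} becomes
\begin{equation*}
\begin{bmatrix} A & 0 \\ I & I \end{bmatrix}\begin{bmatrix} x \\ s\end{bmatrix} = \begin{bmatrix} b \\ \lambda\mathbf{1}\end{bmatrix}, \qquad \begin{bmatrix} x \\ s \end{bmatrix} \geq \mathbf{0}.
\end{equation*}
Call the new constraint matrix $A'$ and right-hand side $b'$; note $A'$ has $m + n$ rows and $2n$ columns, all entries integers, and it has full row rank (the bottom block $[I\ I]$ already has rank $n$, and the equations $Ax = b$ are linearly independent of these since $A$ has full row rank). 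The key observation is that a strictly feasible solution of~\eqref{eq:bddx} — i.e.\ $\mathbf{0} < \bar x < \lambda\mathbf{1}$ — corresponds exactly to a strictly feasible solution $(\bar x, \lambda\mathbf{1} - \bar x) > \mathbf{0}$ of the new system, and infeasibility is likewise preserved; so the new system again satisfies the dichotomy ``infeasible or strictly feasible'' required by the LFS algorithm.

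Next I would bound the two quantities that drive the LFS running time: the bounding radius $r$ and the maximum subdeterminant $\Delta_{A'}$. For the radius, any feasible $(x,s)$ has $\mathbf{0} \leq x \leq \lambda\mathbf{1}$ and $\mathbf{0} \leq s \leq \lambda\mathbf{1}$, so $\|(x,s)\|^2 \leq 2n\lambda^2$, and we may take $r = \lambda\sqrt{2n}$ (or any slightly larger value to make the containment strict). For the subdeterminant, I would argue that $\Delta_{A'} = \Delta_A$: any $(2n)\times(2n)$ — wait, $A'$ has fewer rows than columns in general only if $m + n < 2n$, i.e.\ $m < n$; in any case the relevant submatrices are square of size equal to the number of rows, $m+n$, so one picks an $(m+n)\times(m+n)$ submatrix of $A'$. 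Using the block structure $\begin{bmatrix} A & 0 \\ I & I\end{bmatrix}$ and expanding the determinant along the columns coming from the identity block that are selected, a cofactor/Laplace expansion shows every such determinant equals $\pm$ a minor of $A$ of some order, hence is bounded in absolute value by $\Delta_A$ (and $\Delta_{A'} \geq \Delta_A$ trivially since $A$ embeds as a submatrix via appropriate column choices, modulo the zero block — here one should just claim $\Delta_{A'} \le \Delta_A \cdot \det(I) = \Delta_A$, which is all we need for the upper bound on running time).

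Then I would simply invoke Theorem~\ref{thm:LFS-analysis} applied to the system $(A', b')$ with parameters $r = \lambda\sqrt{2n}$ and $\Delta_{A'} \leq \Delta_A$. Substituting $m \mapsto m + n$, $n \mapsto 2n$, $r \mapsto \lambda\sqrt{2n}$, and $\Delta \mapsto \Delta_A$ into the running-time bound
\begin{equation*}
O\!\left( m^3 + m^2 n + n^2 m + (2n\Delta_A\sqrt{r^2+1})^{\frac{1}{\log_2(7/5)}}\bigl(n^2 + n\log\Delta_A + n\log r\bigr)\right)
\end{equation*}
gives, after noting $2(2n)\Delta_A\sqrt{2n\lambda^2 + 1} = O(n\Delta_A\lambda\sqrt{2n+1})$ up to constants absorbed in the exponent's base and that $(m+n)^3 + (m+n)^2(2n) + (2n)^2(m+n) = O(m^3 + m^2 n + n^2 m)$, the claimed bound
\begin{equation*}
O\!\left(m^3 + m^2 n + n^2 m + (2n\Delta_A\lambda\sqrt{2n+1})^{\frac{1}{\log_2(7/5)}}\bigl(n^2 + n\log\Delta_A + n\log\lambda\bigr)\right).
\end{equation*}
The feasibility/infeasibility decision transfers back to~\eqref{eq:bddx} by reading off $x$ from the returned point $(x,s)$.

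The main obstacle I anticipate is the bookkeeping around the subdeterminant: one must be careful that $\Delta_{A'}$ is genuinely controlled by $\Delta_A$ and does not pick up extra factors from the identity blocks. The clean way is the Laplace-expansion argument sketched above — expand any square submatrix of $A'$ along the rows of the $[I\ I]$ block, each step either contributing a factor of $1$ from a selected identity entry or forcing a pairing that reduces to a minor of $A$ — but one should double-check the edge cases where the submatrix selects columns entirely from the $x$-block or entirely from the $s$-block. A secondary, purely cosmetic obstacle is matching the exponent bases: the bound in Theorem~\ref{thm:LFS-analysis} has $\sqrt{r^2+1}$ whereas the Corollary states $\sqrt{2n+1}$, so one uses $r^2 + 1 = 2n\lambda^2 + 1 \le \lambda^2(2n+1)$ (valid since $\lambda \ge 1$ may be assumed, or else the factor is harmless) to pull the $\lambda$ out and land exactly on the stated form.
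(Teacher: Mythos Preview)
Your approach is correct and essentially identical to the paper's: introduce slacks $s = \lambda\mathbf{1} - x$ to reach standard form, note $\Delta_{\tilde A} = \Delta_A$ and take $r = \lambda\sqrt{2n}$, then invoke Theorem~\ref{thm:LFS-analysis}. One small caveat: your intermediate claim $(m+n)^3 + (m+n)^2(2n) + (2n)^2(m+n) = O(m^3 + m^2n + n^2m)$ is false when $n \gg m$ (an $n^3$ term survives), but this is harmless because the $K\cdot n^2$ term already dominates $n^3$; the paper itself simply glosses over the dimension substitution and asserts $\Delta_{\tilde A} = \Delta_A$ without argument, so you are if anything more careful than the original.
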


\begin{proof}

We first put \eqref{eq:bddx} a standard form.

\begin{equation} \label{eq:bddx-2}
\begin{array}{l}
Ax = b, \\
x + y = \lambda \textbf{1}, \\
-x \leq \textbf{0}, \\
-y \leq \textbf{0}.
\end{array}
\end{equation}

We will use the constraint matrix of \eqref{eq:bddx-2} :
\[
\tilde A = \left[ \begin{array}{cc} A & \mathbf{0}_{m \times n} \\ I_n & I_n\end{array}    \right],
\]
where $\mathbf{0}_{m \times n}$ denotes the $m\times n$ matrix with all 0 entires, and $I_n$ is the $n\times n$ identity matrix. Therefore, $P(\tilde A, [b, \lambda\textbf{1}])$ is the feasible set for \eqref{eq:bddx-2}. Also, note that $\Delta_{\tilde A} = \Delta_A$. Since $\textbf{0} \leq x, y \leq \lambda\textbf{1}$ for all $(x,y)\in P(\tilde A, [b, \lambda\textbf{1}])$, we know that $P(\tilde A, [b, \textbf{1}]) \subset B(0, \lambda\sqrt{2n})$. Therefore, we run LFS with $r= \lambda\sqrt{2n}$ and the system \eqref{eq:bddx-2} as input. Observe that since \eqref{eq:bddx} has a strictly feasible solution, so does \eqref{eq:bddx-2}. By Theorem~\ref{thm:LFS-analysis}, LFS either returns a feasible solution to \eqref{eq:bddx-2} which immediately gives a feasible solution to \eqref{eq:bddx}, or correctly decides that \eqref{eq:bddx-2} is infeasible and hence \eqref{eq:bddx} is infeasible. Moreover, the running time for LFS is $$\begin{array}{c} O \left((m^3 + m^2n + n^2m + (2n\Delta \sqrt{r^2 + 1})^{\frac{1}{\log_2\left(\frac{7}{5}\right)}}\left(n^2 + n\log\Delta_A + n\log(r)\right)\right) \\ = O \left(m^3 + m^2n + n^2m + (2n\Delta_A \lambda\sqrt{2n + 1})^{\frac{1}{\log_2\left(\frac{7}{5}\right)}}\left(n^2 + n\log\Delta_A + n\log(\lambda)\right)\right).\end{array}$$ \end{proof}

Corollary~\ref{cor:bounded-lp} is related to the following theorem of Schrijver. Let $\tilde \Delta_A = \max\{|det(B^{-1})| \st B \textrm{ is a nonsingular submatrix of A} \}$.

\begin{thm}[Theorem 12.3 in~\cite{Schrijver1986}]
A combination of the relaxation method and the simultaneous diophantine approximation method solves a system $Ax \leq b$ of rational linear inequalities in time polynomially bounded by $size(A,b)$ and by $\tilde \Delta_A$.
\end{thm}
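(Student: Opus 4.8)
The plan is to reconstruct the classical argument behind this result, which marries two ingredients: a quantitative bound on the number of steps of the relaxation method in terms of how ``fat'' the polyhedron is, and a preprocessing step that uses simultaneous diophantine approximation to shrink the \emph{magnitude} of the data. Throughout, $P = \{x \in \R^n : Ax \le b\}$, the data is rational, and $\tilde\Delta_A$ plays the role of a conditioning parameter: by Cramer's rule the entries of $B^{-1}$ for a nonsingular submatrix $B$ of $A$ are bounded in terms of $\tilde\Delta_A$ and the entries of $A$, so every vertex of $P$ has coordinates whose sizes are controlled by $\tilde\Delta_A$, $n$, and $size(b)$ (this is the general-position analogue of Lemma~\ref{claim:vertex_den}).

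The first real step is the diophantine-approximation preprocessing. One replaces $(A,b)$ by rational data $(\bar A, \bar b)$ with entries of magnitude polynomial in $n$ and $\tilde\Delta_A$ --- not merely polynomial size --- such that $Ax \le b$ is feasible if and only if $\bar A x \le \bar b$ is, and such that a vertex of $\{\bar A x \le \bar b\}$ can be converted back to a solution of $Ax \le b$. The tool is simultaneous diophantine approximation applied to the rows, or to the vector $b$ relative to the column lattice of $A$, to within an accuracy $1/\mathrm{poly}(\tilde\Delta_A)$; this is fine enough that the sign of every feasibility-deciding expression --- an expression built from minors of $[A \mid b]$, each bounded away from $0$ by $1/\mathrm{poly}(\tilde\Delta_A)$ --- is unchanged by the rounding.

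With the reduced data in hand, the polyhedron $\bar Q = \{\bar A x \le \bar b\}$, intersected with a box of polynomial radius, has circumradius $R$ and, when full-dimensional, inradius $\rho$, with both $R$ and $1/\rho$ of polynomial magnitude in $n$ and $\tilde\Delta_A$ --- the inradius bound again coming from subdeterminant estimates on the vertices. Running the Agmon--Motzkin--Schoenberg relaxation method with reflections ($\lambda = 2$) and the maximal-violation rule from the origin, the classical geometric-convergence estimate $\operatorname{dist}(z_{k+1}, \bar Q) \le \sqrt{1 - \rho^2/R^2}\,\operatorname{dist}(z_k, \bar Q)$ yields a point within distance $\epsilon$ of $\bar Q$ after $O\bigl((R^2/\rho^2)\log(R/\epsilon)\bigr)$ iterations, which is polynomial once $1/\epsilon$ is chosen of polynomial magnitude (smaller than the least nonzero separation between vertex coordinates). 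Rounding this near-feasible point to the nearest vertex of $\bar Q$ --- identify the nearly tight constraints, solve the resulting square system, verify exactly --- produces an exact vertex, which maps back to a solution of $Ax \le b$. If $P$ is not full-dimensional, one detects an implied equality (by LP duality, or in the spirit of Proposition~\ref{prop:eq-forced}), substitutes it out, and recurses in lower dimension; and if the relaxation method exhausts its iteration budget without reaching distance $\epsilon$, the convergence estimate certifies $\bar Q = \emptyset$, hence $P = \emptyset$.

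The hard part is the rounding lemma of the second step: proving that a simultaneous rational approximation of the data with common denominator and residual error governed \emph{only} by $\tilde\Delta_A$ (and not by $size(A,b)$) leaves the feasibility status of $Ax \le b$ --- and, more, the combinatorial identity of its vertices --- intact. This is precisely where $\tilde\Delta_A$ rather than $size(A,b)$ alone enters the running time, and it is the only step that genuinely exploits the arithmetic of the minors of $[A \mid b]$; once it is in place, the relaxation-method convergence bound and the final vertex rounding are standard.
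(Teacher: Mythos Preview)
The paper does not prove this theorem. It is quoted verbatim from Schrijver's book as a point of comparison with Corollary~\ref{cor:bounded-lp}; the surrounding text explicitly frames it as ``the following theorem of Schrijver'' and then discusses how the paper's own result relates to it. There is no proof in the paper to compare your proposal against.

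What you have written is a plausible high-level reconstruction of the classical argument from \cite{Schrijver1986}, and the two main ingredients you identify --- simultaneous diophantine approximation to reduce the magnitude of the data, followed by the geometric convergence bound for the relaxation method on a polyhedron with controlled inradius/circumradius ratio --- are indeed the right ones. But since the paper itself offers no proof, your sketch is being measured against Schrijver's original, not against anything in this manuscript. If the intent was to supply a proof the paper omits, you should say so explicitly and flesh out the rounding lemma (your ``hard part''), since as written that step is only asserted, not argued; in particular, you would need to spell out how the approximation accuracy is chosen so that every sign-determining minor of $[A \mid b]$ survives the perturbation, and how the lattice-basis-reduction algorithm delivers such an approximation in time polynomial in $size(A,b)$.
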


On one hand, we have the additional assumptions of being bounded and having strictly feasible solutions. On the other hand, we can get rid of the dependence of the running time on $size(A,b)$ and the use of the simultaneous diophantine approximation method, which utilizes non-trivial lattice algorithms. It is also not immediately clear how $\Delta_A$ is related to $\tilde \Delta_A$. We finally prove Theorem~\ref{thm:totally-unimodular}.
\old{
\begin{cor} Consider the linear feasibility problem given by \eqref{eq:bddx}. Suppose $A$ is a totally unimodular matrix and $\lambda$ is bounded by a polynomial in $n, m$; the latter happens, for instance, when $\lambda=1$. Furthermore, suppose we know that if \eqref{eq:bddx} is feasible, it has a strictly feasible solution. Then there exists a strongly polynomial time algorithm that either finds a feasible solution of \eqref{eq:bddx}, or correctly decides that the system is infeasible. The running time for this algorithm is $O \left(m^3 + m^2n + n^2m + n^2(2n\lambda\sqrt{2n + 1})^{\frac{1}{\log_2\left(\frac{7}{5}\right)}}\right)$. If $\lambda=1$, then the running time can be upper bounded by $O (m^3 + m^2n + n^2m + n^{5.1})$.
\end{cor}
}
\begin{proof}[Proof of Theorem~\ref{thm:totally-unimodular}]
If $A$ is totally unimodular, then $\Delta_A = 1$. The result now follows from Corollary~\ref{cor:bounded-lp}.\end{proof}


\section{A General Algorithm for Linear Feasibility Problems} \label{LFG}

In this section, we describe an algorithm for solving general linear feasibility problems using Chubanov's D\&C algorithm and the ideas developed in Section~\ref{basicchubanov}. We call this algorithm the {\bf LFG Algorithm}, as an acronym for {\bf L}inear {\bf F}easibility problems in {\bf G}eneral.
\old{
Our proposed algorithm is a simplification of the algorithm proposed by Chubanov \cite{chubanov} that gives us a polynomial algorithm that decides the feasibility of the general LP system \eqref{eq:orsys}.  This is an improvement over the Chubanov Relaxation algorithm, which, as we mention in the previous section in more detail, does not truly decide feasibility.  Where Chubanov goes from being an LP feasibility algorithm to something in between LP feasibility and IP feasibility is his inability, when the D\&C returns a separating hyperplane, to determine if any inequality is really an implicit equlity.  \par

Instead of trying to directly determine which inequality is an implicit equality, we prove in Lemma 3.1 simply that such an equality must exist.  As we will see, this existential proof is all we need.

\begin{lemma}\label{lem:equality-forcing}
If the system $Ax - bt = 0, Cx - dt \leq -1, -t \leq -2$ is infeasible, then there exists $l \in \{1, \ldots, k\}$ such that $c_l\cdot x = d_l$ for all $x$ satisfying $Ax=b, Cx \leq d$.
\end{lemma}

\begin{proof}
We prove the contrapositive. So suppose that for all $k \in \{1, \ldots, l\}$, there exists $x_k$ satisfying $Ax_k=b, Cx_k \leq d$ with $c_k\cdot x_k < d_k$. Then using $\bar x = \frac{1}{l}(x_1 + \ldots + x_l)$, we get that $A\bar x = b$ and $c_k\bar x < d_k$ for all $k \in \{1, \ldots, l\}$. Let $\eta_k = d_k - c_k\cdot\bar x > 0$ and let $\eta = \min\{\frac{1}{2}, \eta_1, \ldots, \eta_l\}$. Therefore, $\eta > 0$. Let $x^* = \frac{\bar x}{\eta}$ and $t^* = \frac{1}{\eta}$. Then 
$$Ax^* - bt^* = \frac{1}{\eta}(A\bar x - b) = 0.$$
For every $k \in \{1, \ldots, l\}$, 
$$d_k t^* - c_k x^* = \frac{1}{\eta}(d_k - c_k \bar x) = \frac{\eta_k}{\eta}\geq 1.$$ 
Therefore, $c_k x^* - d_k t^* \leq -1$ for every $k \in \{1, \ldots, l\}$. Finally, since $t = \frac{1}{\eta} \geq 2$ be definition of $\eta$. And hence $-t \leq -2$.
\end{proof}

\begin{lemma}
If the strengthened parameterized system (2.) is infeasible, then either:
\begin{enumerate}[(i)]
\item The original system (2.) is infeasible or
\item $\exists$ $l$ such that $c_l x - d_l = 0$ for all $x \in P$.
\end{enumerate}
\end{lemma}

\begin{proof}
We can transform \eqref{eq:orsys} into
\begin{equation} \label{eq:eqls}
\begin{array}{l}
Ax = b, \\
C(x^+ - x^-) + s = d, \\
x^+, \ x^-, \ s \geq 0.
\end{array}
\end{equation}

Parameterizing this system and then strengthening it, as we did in the previous section, gives us
\begin{equation} \label{eq:streqls}
\begin{array}{l}
A'x' - b't = 0, \\
x' \leq -1, \\
-t \leq -2.
\end{array}
\end{equation}

Now let us assume \eqref{eq:streqls} is infeasible and \eqref{eq:eqls} is feasible (if and only if \eqref{eq:orsys} is feasible).  Also assume there does not exist an $l$ such that $x_l = 0$.  This implies $x_i < 0$ for all $i$.  Let 
$$\epsilon = \min \{\frac{1}{2}, -x_i\},$$
and define 
$$\hat{x} = \frac{x'}{\epsilon}, \ t = \frac{1}{\epsilon}.$$
Then we have
$$A' \hat{x} - bt = 0 \ \textrm{and} \ \hat{x} \leq -1$$
and so $(\hat{x},t)$ is a feasible solution for \eqref{eq:streqls}, contradicting our assumption the strengthened system was infeasible.  Thus there must exist some $l$ such that $x_l = 0$.
\end{proof}
}
Before we can state our algorithm and prove its correctness, we need a couple of other pieces. \old{ First, from Schrijver's Corollary 10.2b \cite{sch}, given a system \eqref{eq:orsys} we will be able to construct a radius $\hat{r}$ that is guaranteed to contain some solution if \eqref{eq:orsys} is feasible.}

\begin{lemma}\label{lem:schrijver-bound}[Schrijver Corollary 10.2b]
Let $P$ be a rational polyhedron in $\R^n$ of facet complexity $\phi$.  Define
$$Q = P \cap \{x \in \R^n \vert -2^{5n^2\phi} \leq x_i \leq 2^{5n^2\phi} \ \textrm{for} \ i = 1,\ldots,n\}.$$
Then dim$(P)$ = dim$(Q)$.
\end{lemma}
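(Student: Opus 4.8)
The plan is to derive this from the basic encoding-size estimates for rational polyhedra, taking Schrijver's Theorem 10.2 (size bounds on vertices and extreme rays) as the one external ingredient. Recall that facet complexity $\phi$ means $P$ can be written as $\{x : a_i^\top x \le \beta_i,\ i \in I\}$ together with finitely many equalities, where each pair $(a_i,\beta_i)$ has encoding size at most $\phi$. Theorem~10.2 of~\cite{Schrijver1986} then yields a decomposition $P = \textrm{conv}(V) + \textrm{cone}(E)$ in which every vector of the finite sets $V$ (``vertices'') and $E$ (``extreme rays'') has size at most $4n^2\phi$; in particular each coordinate of each such vector has absolute value at most $2^{4n^2\phi}$.

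First I would observe $V \subseteq Q$, since $Q$ is $P$ intersected with the box $\lVert x \rVert_\infty \le 2^{5n^2\phi}$ and $2^{4n^2\phi} \le 2^{5n^2\phi}$. Next, fix one point $v_0 \in V$ (if $V = \emptyset$ then $P = \emptyset$ and the claim is trivial). For each $e \in E$ the ray $v_0 + \R_{\geq 0}\,e$ lies in $P$, and the point $v_0 + e$ has all coordinates bounded in absolute value by $2^{4n^2\phi} + 2^{4n^2\phi} = 2^{4n^2\phi + 1} \le 2^{5n^2\phi}$, using $n^2\phi \ge 1$. Hence $v_0 + e \in Q$ for every $e \in E$, so the finite point set $S := V \cup \{v_0 + e : e \in E\}$ is contained in $Q$.

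Then I would finish with an affine-hull computation showing $\textrm{aff}(S) = \textrm{aff}(P)$. Indeed $\textrm{aff}(P) = \textrm{aff}(\textrm{conv}(V) + \textrm{cone}(E))$ equals the translate $v_0 + L$, where $L$ is the linear span of $\{v - v_0 : v \in V\} \cup E$: the recession cone contributes its \emph{full} linear span to the affine hull, because $v_0,\, v_0+e,\, v_0+2e \in P$ are collinear. On the other hand $S \supseteq V$ already generates $v_0 + \textrm{span}\{v - v_0 : v \in V\}$, and each $v_0 + e \in S$ contributes the direction $e$, so $\textrm{aff}(S) = v_0 + L$ as well. Therefore $\dim Q \ge \dim\textrm{aff}(S) = \dim P$, while $Q \subseteq P$ gives $\dim Q \le \dim P$, and equality follows.

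The only nontrivial input is the size bound of Schrijver's Theorem~10.2, namely that the vertices and extreme rays of a polyhedron of facet complexity $\phi$ have polynomially bounded encoding size; this is where Cramer's rule and Hadamard-type determinant bounds enter, and it is also the source of the exponent $4n^2\phi$ (and hence of the slightly larger $5n^2\phi$, chosen to absorb both the ``$+1$'' and the ray translations). Everything past that point is elementary convex geometry; the one place needing a little care is the non-pointed case, which is handled above by observing that affine hulls detect the full linear span of the recession directions, not merely the rays themselves.
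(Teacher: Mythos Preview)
The paper does not supply a proof of this lemma at all; it simply quotes the result as Corollary~10.2b from Schrijver's book. Your argument is correct and is in fact essentially identical to Schrijver's own proof there: invoke Theorem~10.2 to write $P=\mathrm{conv}(V)+\mathrm{cone}(E)$ with all generators of size at most $4n^2\phi$, observe that the finite set $V\cup\{v_0+e:e\in E\}$ lies in $Q$ (since each coordinate is bounded by $2\cdot 2^{4n^2\phi}\le 2^{5n^2\phi}$), and note that this set already spans the affine hull of $P$. One small remark: your justification that $n^2\phi\ge 1$ is safe because in Schrijver's conventions the facet complexity satisfies $\phi\ge n+1$ by definition, so the inequality $4n^2\phi+1\le 5n^2\phi$ holds with room to spare.
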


Thus, if we use $\hat{r} = 2^{5n^2\phi} \sqrt{n}$ then $B(\textbf{0},\hat{r})$ will circumscribe the hypercube in the above lemma from Schrijver. We will also have dim$(P)$ = dim$(P \cap B(\textbf{0},\hat{r}))$. \par
The second piece we need comes from Papadimitriou and Stieglitz \cite{papa}.  Simply put, the theorem states that \eqref{eq:orsys} is feasible if and only if some other system is strictly feasible (i.e. all the inequalities are strict inequalities).  The lemma stated below is a stronger version of their Lemma 8.7 in \cite{papa}.

\begin{lemma}\label{lem:papa}[Lemma 8.7 in \cite{papa}]
The system \eqref{eq:orsys} is feasible if and only if
\begin{equation} \label{eq:strict}
\begin{array}{l}
Ax = b, \\
Cx < d + \nu. \\
\end{array}
\end{equation}
is feasible, where $\nu = 2^{-2T}$ when $T$ is the size of the input data. Moreover, given a solution to~\eqref{eq:strict}, we can construct a solution to~\eqref{eq:orsys} in strongly polynomial time.
\end{lemma}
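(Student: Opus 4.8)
The plan is to establish the two directions of the equivalence and then the constructive ``moreover'' claim, following the argument behind Lemma~8.7 of \cite{papa} together with the observation that its rounding step is already constructive. One direction is trivial: if $x^*$ satisfies $Ax^* = b$ and $Cx^* \leq d$, then since $\nu > 0$ componentwise we have $Cx^* < d + \nu$, so \eqref{eq:strict} is feasible; this needs nothing about the input size $T$. The content is in the converse, which I would argue in contrapositive form through a Farkas certificate of bounded size, and in upgrading the rounding of a given solution of \eqref{eq:strict} to a strongly polynomial procedure.

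For the converse, suppose $\{x : Ax = b,\ Cx \leq d\} = \emptyset$. By Farkas' lemma there exist $y$ and $z \geq \zero$ with $y^TA + z^TC = \zero^T$ and $y^Tb + z^Td < 0$; after normalizing (say $y^Tb + z^Td \leq -1$), the set of such $(y,z)$ is a rational polyhedron whose facet complexity is polynomial in $T$, so we may take $(y,z)$ to be a basic solution of it. By the classical bound on the magnitude of basic solutions of rational systems (see \cite{papa}, or Section~10 of \cite{Schrijver1986}), every entry of $(y,z)$ is then a rational of bit-size $O(T)$; in particular $\sum_k z_k \leq 2^{cT}$ for an absolute constant $c$. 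Now if some $x$ satisfied \eqref{eq:strict}, then $z^TCx = -y^TAx = -y^Tb$, while $z^T(Cx) < z^Td + \nu\sum_k z_k$, and combining these gives $-1 \geq y^Tb + z^Td > -\nu\sum_k z_k \geq -\nu\,2^{cT}$, i.e. $\nu > 2^{-cT}$. Since $\nu = 2^{-2T}$ is calibrated, under the input-size convention of \cite{papa}, to be no larger than $2^{-cT}$, this is a contradiction, so \eqref{eq:strict} is infeasible. This proves the equivalence.

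For the ``moreover'' part, suppose we are handed a solution $x_0$ of \eqref{eq:strict} (of size polynomial in $T$, as it will be whenever it is produced by the algorithms of this paper); we produce a solution of \eqref{eq:orsys} by \emph{purification}. By the equivalence just proved \eqref{eq:orsys} is feasible, so by Lemma~\ref{lem:schrijver-bound} it has a solution inside the box $\{\,|x_i| \leq 2^{5n^2\phi}\,\}$, and hence the polytope $Q := \{x : Ax = b,\ Cx \leq d + \nu\one,\ |x_i| \leq R\}$ with $R := \max(\|x_0\|_\infty, 2^{5n^2\phi})$ is nonempty, bounded, contains $x_0$, and has facet complexity polynomial in $T$. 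Starting from $x_0$, run the standard strongly polynomial purification: while the current point is not a vertex of $Q$, the active-constraint rows leave a free direction $w$, and one moves along $\pm w$ until a new facet of $Q$ becomes tight; boundedness of $Q$ guarantees a step is always possible, so after at most $n$ such steps — each of which only solves a linear system and computes a minimum ratio — one reaches a vertex $\hat x$ of $Q$. Finally, each slack $d_k - c_k\hat x$ is a rational whose denominator is bounded by a subdeterminant of the integer data of $Q$, hence of bit-size $2^{O(T)}$, and it exceeds $-\nu = -2^{-2T}$, which by the same calibration lies below that granularity; therefore $d_k - c_k\hat x \geq 0$ for all $k$, and $\hat x$ is the desired solution of \eqref{eq:orsys}.

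I expect the main obstacle to be the calibration step: one must verify that $\nu = 2^{-2T}$ is genuinely strictly smaller than both the least nonzero absolute value a basic Farkas certificate can take and the coordinate granularity of a vertex of the boxed polytope $Q$. Both reduce to Hadamard-type bounds on determinants of the integer data, but they are sensitive to the exact convention used to measure the ``size $T$ of the input data'', which is precisely why the specific value of $\nu$ matters and why we lean on \cite{papa} for this point. A secondary technicality is that $\{x : Ax = b,\ Cx < d + \nu\}$ need not have any vertex when $A$ lacks full column rank — its closure may contain a line — which is why the converse is argued via a Farkas certificate rather than by rounding a vertex directly, and why the constructive part first confines everything to a polytope using Lemma~\ref{lem:schrijver-bound}.
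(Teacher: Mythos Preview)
Your Farkas-based equivalence argument is sound and is a legitimate alternative to the constructive route; the paper as compiled prints no proof here (it simply cites \cite{papa}), though the source carries an \texttt{\textbackslash old\{\}}-suppressed proof that handles the converse and the construction in a single pass by directly rounding a solution of \eqref{eq:strict}.

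The genuine gap is in your ``moreover'' step. You purify $x_0$ to a vertex $\hat x$ of $Q=\{x: Ax=b,\ Cx\leq d+\nu\one,\ |x_i|\leq R\}$ and then assert that each slack $d_k-c_k\hat x$ is a rational with denominator bounded by a subdeterminant of the \emph{integer} data. But the right-hand side data of $Q$ is not integer: it contains $d+\nu\one$, and possibly a non-integer $R=\|x_0\|_\infty$. At a vertex some constraints $c_j x=d_j+\nu$ may well be tight, so $\hat x=M^{-1}r$ with $M$ integer but $r$ containing entries $d_j+\nu$; in particular, if $c_k x\leq d_k+\nu$ is itself tight at $\hat x$ then $d_k-c_k\hat x=-\nu$ exactly, which neither has the claimed granularity nor is $\geq 0$. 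So ``vertex of $Q$'' does not give you the rounding conclusion. The paper's suppressed argument avoids this by first enlarging the set $L$ of ``almost-tight'' rows (those with $d_l\leq c_l x_0<d_l+\nu$) via line searches in the null space of the current tight rows, and then defining $\hat x$ as the solution of $Ax=b$, $c_l x=d_l$ for $l$ in a basis of $L$ --- right-hand side $d_l$, not $d_l+\nu$ --- so that $\hat x$ genuinely has integer-data denominators and the integrality estimate $|D|(c_k\hat x-d_k)<1$ forces $c_k\hat x\leq d_k$. Your purification idea can be repaired along the same lines (purify with respect to $Cx\leq d$ on the almost-tight rows rather than $Cx\leq d+\nu\one$), but as written the granularity claim is the step that fails.
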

\old{
\begin{proof}
The direction from \eqref{eq:orsys} to \eqref{eq:strict} is obvious. \par
Let $a_i$ denote the $i$-th row of $A$ and $c_l$ denote the $l$-th row of $C$.  Given $x_0$ a solution of \eqref{eq:strict}, define $L_{x_0} = \{ c_l \ \vert \ d_l \leq c_l x_0 < d_l + \nu \}$.  Then either for every $c_k \notin L_{x_0}$, we can write
\begin{equation} \label{eq:dpscombo}
c_k = \sum_{c_l \in L_{x_0}} \beta_{k,l} c_l + \sum_{i = 1}^m \alpha_{k,i} a_i
\end{equation}
or we can construct a different solution $z_0$ of \eqref{eq:strict} such that $L_{x_0} \subset L_{z_0}$. \par
To show this claim, assume there exists some $k$ such that $c_k$ cannot be expressed as \eqref{eq:dpscombo}.  Let $L_{x_0}'$ and $A'$ be a maximal linearly independent subsets of $L_{x_0}$ and $A$.  Then consider the system
\begin{equation} \label{eq:dpsclm}
\begin{array}{l}
a_i z = 0 \ \ a_i \in A' \\
c_l z = 0 \ \ c_l \in L_{x_0}' \\
c_k z = 1.
\end{array}
\end{equation}
This is a linear system of full rank and thus has a solution $z'$.  Let $z_0 = x_0 + \lambda z'$ where $d_k - c_k x_0 < \lambda < d_k + \nu - c_k x_0$.  Then $L_{z_0}$ will contain not only $c_k$, but also every element in $L_{x_0}$ as well.  Thus the claim is proven: either every $c_k \notin L_{x_0}$ is a linear combination of $c_l's \in L_{x_0}$ and $a_i's \in A$, or we can construct $L_{z_0}$ that contains both $c_k$ and $L_{x_0}$. \par
Now we need to construct a solution of \eqref{eq:orsys} from $x_0$.  By the above construction of $L_{z_0}$ from $L_{x_0}$, let $L$ be a maximal set of this type.  As before, select maximal linearly independent subsets of $L$ and $A$, $L'$ and $A'$ respectively.  Then every $c_k \notin L'$ can be expressed as $\eqref{eq:dpscombo}$, and by Cramer's Rule we know the $\alpha_i's$ and $\beta_l's$ are numbers expressible from determinants of absolute value less than $2^T$: $\alpha_{k,i} = \frac{D_{k,i}}{|D|}$ and $\beta_{k,l} = \frac{D_{k,l}}{|D|}$.  Let $\hat{x}$ be a solution to
$$ \begin{array}{ll}
a_i x = b_i & i \in A' \\
c_l x = d_l & l \in L'.
\end{array} $$
Then for each $k$ we have
\begin{eqnarray*}
|D|(c_k \hat{x} - d_k) &=& \sum_{a_i \in A'} |D| (\alpha_{k,i} a_i \hat{x}) + \sum_{c_l \in I'} |D| (\beta_{l,i} c_l \hat{x}) - |D| d_k \\
&=& \sum_{\{i \vert a_i \in A'\}} |D| (\alpha_{k,i} b_i) + \sum_{\{l \vert c_l \in I'\}} |D| (\beta_{l,i} d_l) - |D| d_k \\
&=& \sum_{\{i \vert a_i \in A'\}} |D| (\alpha_{k,i} b_i) + \sum_{\{l \vert c_l \in I'\}} |D| (\beta_{l,i} d_l) - |D| (d_k - c_k x_0) - |D|c_k x_0 \\
&=& \sum_{\{i \vert a_i \in A'\}} |D| (\alpha_{k,i} b_i) + \sum_{\{l \vert c_l \in I'\}} |D| (\beta_{l,i} d_l) - \sum_{a_i \in A'} |D| (\alpha_{k,i} a_i x_0) - {} \\
&& {} - \sum_{c_l \in I'} |D| (\beta_{l,i} c_l x_0) - |D| (d_k - c_k x_0) \\
&=& \sum_{\{i \vert a_i \in A'\}} |D| (\alpha_{k,i} (b_i - a_i x_0)) + \sum_{\{l \vert c_l \in I'\}} |D| (\beta_{l,i} (d_l - c_l x_0)) + |D|(c_k x_0 - d_k \\
&\leq& 0 + |D|\sum_{\{l \vert c_l \in I'\}} |\beta_{l,i} (c_l x_0 - d_l)| + |D|\nu \\
&<& \sum_{\{l \vert c_l \in I'\}} |D_{k,l}|\nu + |D|\nu \\
&<& (\vert L' \vert + 1) (2^T) (2^{-2T}) \\
&<& 1.
\end{eqnarray*}
Thus for all $k$ we have $|D|(c_k \hat{x} - d_k) < 1$ even though this expression is an integer, since all components of $\hat{x}$ divide $|D|$.  Thus $c_k \hat{x} - d_k \leq 0$ and so $\hat{x}$ is in fact a solution to \eqref{eq:orsys}.
\end{proof}

\emph{How do we know the above $\hat{x}$ works for all of the equalities in the original system?}\\

Now for the version we will use in the feasibility algorithm.  The following corollary is just a special case of the above lemma.

\begin{cor}
The system
\begin{equation} \label{eq:dsys}
\begin{array}{l}
Ax = b, \\
-x \leq 0.
\end{array}
\end{equation}
is feasible if and only if
\begin{equation} \label{eq:dstr}
\begin{array}{l}
Ax = b, \\
-x < \nu.
\end{array}
\end{equation}
is feasible, where $\nu = 2^{-2T}$ and $T$ is the bit length of $A$ and $b$. Moreover, given a solution to~\eqref{eq:dstr}, we can construct a solution to~\eqref{eq:dsys} in strongly polynomial time.
\end{cor}

We are now ready to state our new feasibility algorithm.  There are just a couple preprocessing points to consider:
\begin{enumerate}
\item Given a general LP \eqref{eq:orsys}, we first need to introduce slack variables, giving us the system
\begin{equation} \label{eq:presys}
\begin{array}{l}
A(x^+ - x^-) = b, \\
C(x^+ - x^-) + s = d, \\
x^+, \ x^-, \ x \geq 0.
\end{array}
\end{equation}
\item In the algorithm we will only refer to \eqref{eq:dsys}, as we will assume the transformation to \eqref{eq:presys} has already taken place.
\end{enumerate}

\emph{Do we still need this preprocessing we talked about on Friday given Amitabh's updated Lemma 3.1 and the stronger version of the P-S Lemma 8.7 we proved?}
}
\begin{alg} {\small THE LFG ALGORITHM} \\
\emph{Input:} $Ax = b$, $-x \leq 0$. \\
\emph{Output:} A feasible point $x^*$ or the decision the system is infeasible. \\
\textbf{Set} $\nu = 2^{-2T}$ where $T$ is the bit length of the input data\\
\textbf{Set} a new system
\begin{equation} \label{eq:dlp1}
\begin{array}{l}
Ax = b, \\
Cx \leq d + \frac{\nu}{2}.
\end{array}
\end{equation}
\textbf{Parameterize} \eqref{eq:dlp1}:
\begin{equation} \label{eq:dlp2}
\begin{array}{l}
Ax - bt = 0, \\
Cx - (d+\frac{\nu}{2}) t \leq 0,\\
-t \leq -1.
\end{array}
\end{equation}
\textbf{Run} Chubanov's D\&C subroutine on the strengthened version of \eqref{eq:dlp2}
\begin{equation} \label{eq:dlp3}
\begin{array}{l}
Ax - bt = 0, \\
Cx - (d+\frac{\nu}{2}) t \leq -1,\\
-t \leq -2.
\end{array}
\end{equation}
with $z = \textbf{0}$, $r = \hat{r}$ from Lemma 3.2 applied to \eqref{eq:dlp3}, and $\epsilon = 1$ \\
\textbf{If} a feasible solution $(x^*, t^*)$ is found \\
\hspace*{.2in} \textbf{Return} $\hat{x}$ from the proof of Lemma 3.3, with $x_0 = \frac{x^*}{t^*}$ \\
\textbf{Else} Lemma 3.1 implies one of our inequalities is an implied equality \\
\hspace*{.2in} \textbf{Return} ``The system \eqref{eq:dlp1} is INFEASIBLE''
\end{alg}

\begin{thm}
The LFG Algorithm correctly determines a feasible point $x^*$ of \eqref{eq:orsys} or determines the system is infeasible in a finite number of steps.
\end{thm}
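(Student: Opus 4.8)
The plan is to trace through the three possible outcomes of Chubanov's D\&C subroutine exactly as in the proof of Theorem~\ref{thm:LFS-analysis}, but now carrying along the extra $C$-block of inequalities and the perturbation $\nu$. First I would recall that by Lemma~\ref{lem:papa}, the system~\eqref{eq:orsys} is feasible if and only if the relaxed system~\eqref{eq:dlp1} (with $d$ replaced by $d+\tfrac{\nu}{2}$, which is a fortiori implied by $Cx < d+\nu$) is feasible, and moreover a solution of the relaxed system can be converted to a solution of~\eqref{eq:orsys} in strongly polynomial time via the construction in the proof of Lemma~\ref{lem:papa}. So it suffices to show the algorithm correctly decides feasibility of~\eqref{eq:dlp1}. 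Next I would invoke Lemma~\ref{lem:schrijver-bound}: the facet complexity $\phi$ of the polyhedron defined by~\eqref{eq:dlp3} is polynomially bounded in the input size $T$, so $\hat r = 2^{5n^2\phi}\sqrt{n}$ is a legitimate radius with the property that the feasible region of~\eqref{eq:dlp3}, if nonempty, meets $B(\mathbf{0},\hat r)$; this justifies running D\&C with $z=\mathbf 0$, $r=\hat r$, $\epsilon=1$.

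Now the case analysis. In CASE 1, D\&C returns an $\epsilon=1$-approximate solution $(x^*,t^*)$ of~\eqref{eq:dlp3}; by the strengthening argument of Section~\ref{basicchubanov} this is an \emph{exact} solution of~\eqref{eq:dlp2}, hence $\tfrac{x^*}{t^*}$ solves~\eqref{eq:dlp1}, and then the Lemma~\ref{lem:papa} construction recovers a genuine solution of~\eqref{eq:orsys}. In CASE 2, D\&C fails, returning $h_1=-\gamma h_2$; by Proposition~\ref{prop:eq-forced} applied to~\eqref{eq:dlp3} there is an inequality of $Cx \le d+\tfrac{\nu}{2}$ forced to equality on the whole feasible region of~\eqref{eq:dlp1} — but I must then argue this means~\eqref{eq:dlp1} is infeasible (or, if not, I must argue the algorithm's declaration is still correct for~\eqref{eq:orsys}). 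In CASE 3, D\&C returns a separating hyperplane for $B(\mathbf 0,\hat r)$ from the feasible set of~\eqref{eq:dlp3}; by the choice of $\hat r$ via Lemma~\ref{lem:schrijver-bound}, the feasible set of~\eqref{eq:dlp3} must then be empty, so~\eqref{eq:dlp2} is infeasible, hence~\eqref{eq:dlp1} is infeasible, hence by Lemma~\ref{lem:papa} so is~\eqref{eq:orsys}. Finally I would note that finiteness is immediate since Proposition~\ref{prop:run-time} bounds the number of arithmetic operations of a single D\&C call (with the parameters $K$, $\rho$, $\mu$ instantiated for this input), and the algorithm makes only one such call plus strongly-polynomial pre- and post-processing.

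The main obstacle is CASE 2 (and to a lesser extent the exact bookkeeping in CASE 3). When D\&C ``fails'' on~\eqref{eq:dlp3}, Proposition~\ref{prop:eq-forced} only tells us some $c_l x = d_l + \tfrac{\nu}{2}$ holds throughout the feasible set of~\eqref{eq:dlp1}; this does \emph{not} by itself make~\eqref{eq:dlp1} infeasible, so the algorithm's blunt output ``\eqref{eq:dlp1} is INFEASIBLE'' needs justification. The natural fix — the one the surrounding text gestures at — is to observe that because of the $\tfrac{\nu}{2}$ slack, an implied equality $c_l x = d_l + \tfrac{\nu}{2}$ on~\eqref{eq:dlp1} is incompatible with~\eqref{eq:orsys} having any solution at all (any solution of~\eqref{eq:orsys} satisfies $c_l x \le d_l < d_l+\tfrac{\nu}{2}$), so in fact~\eqref{eq:orsys} is infeasible and the declaration is correct — though strictly speaking the cleanest route is to recurse: move the implied equality into the equality block and re-run, which terminates after at most $l$ rounds. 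I would adopt whichever of these the paper's framework supports, and flag that this is the delicate point where the general LFG algorithm is \emph{correct} but, as the authors already warned, not guaranteed to be polynomial, since $\hat r$ and $\nu$ depend exponentially on $T$.
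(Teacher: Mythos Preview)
Your CASE~1 and CASE~2 are correct and match the paper's argument essentially verbatim. In particular, the ``obstacle'' you flag in CASE~2 is not an obstacle at all: your own resolution---that any solution of~\eqref{eq:orsys} satisfies $c_l x \le d_l < d_l + \tfrac{\nu}{2}$, so an implied equality $c_l x = d_l + \tfrac{\nu}{2}$ on~\eqref{eq:dlp1} forces~\eqref{eq:orsys} to be infeasible---is exactly what the paper does, in one sentence. No recursion is needed.

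The gap is in CASE~3. You write ``the feasible set of~\eqref{eq:dlp3} must then be empty, so~\eqref{eq:dlp2} is infeasible.'' This implication is backwards: \eqref{eq:dlp3} is the \emph{strengthening} of~\eqref{eq:dlp2}, so feasibility of~\eqref{eq:dlp3} implies feasibility of~\eqref{eq:dlp2}, not the other way around. Infeasibility of the strengthened homogenized system tells you nothing directly about the unstrengthened one. The paper repairs this by invoking Lemma~\ref{lem:equality-forcing1}: from infeasibility of~\eqref{eq:dlp3} one deduces that some inequality $c_l x \le d_l + \tfrac{\nu}{2}$ is an implied equality for~\eqref{eq:dlp1}, and then one applies \emph{exactly the same} $\tfrac{\nu}{2}$-slack argument you already gave in CASE~2 to conclude~\eqref{eq:orsys} is infeasible. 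In other words, CASE~3 reduces to CASE~2 via Lemma~\ref{lem:equality-forcing1}, not via a direct chain through~\eqref{eq:dlp2}. Once you make that substitution, your proof is complete and coincides with the paper's.
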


\begin{proof}
\old{The only piece we need to actually worry about finishing in a finite number of steps is Chubanov's D\&C subroutine.  But Lemma 3.1 in \cite{chubanov} proves the algorithm finishes in
$$O \left( \left( \hat{r}(1 + \nu)\right)^{\frac{1}{\log_2\left(\frac{7}{5}\right)}} \right)$$
iterations.  Thus we know the LFG Algorithm finishes in a finite number of steps.}
To prove the correctness of the algorithm, we need to look at each of the three cases Chubanov's D\&C can return. \\
CASE 1:  An $\epsilon$-approximate solution $(x^*,t^*)$ is found for \eqref{eq:dlp3}.  Then $(x^*,t^*)$ is an exact solution to \eqref{eq:dlp2}, and $\frac{x^*}{t^*}$ is a solution to \eqref{eq:dlp1}, and hence is also a solution to \eqref{eq:strict}. Using Lemma~\ref{lem:papa}, we can construct a feasible solution to our original system \eqref{eq:orsys}. \\
CASE 2:  The D\&C algorithm fails to complete, returning consecutive separating hyperplanes $(h_1,\delta_1)$ and $(h_2, \delta_2)$ such that $h_1 = -\gamma h_2$ for some $\gamma > 0$. By Proposition~\ref{prop:eq-forced}, we know that there exists $k \in \{1, \ldots, l\}$ such that $c_kx = d_l + \frac{\nu}{2}$ for all solutions to $Ax=b, Cx \leq d + \frac{\nu}{2}$. But this simply implies that $Ax=b, Cx \leq d$ is infeasible.\old{ Then we have two possibilities to consider.  Either \eqref{eq:dlp1} is infeasible, and so the original system \eqref{eq:dsys} is also infeasible, or we can apply Chubanov's Lemma 4.1 \cite{chubanov} and say that for all feasible $x$ of \eqref{eq:dlp1}, there exists at least one $l$ such that $-x_l = \frac{\nu}{2}$.  But then $-x_l = \frac{\nu}{2} > 0$, and thus the original system \eqref{eq:dsys} is infeasible.  Therefore if the D\&C fails, our original system $Ax = b$, $-x \leq 0$ is infeasible.} \\
CASE 3:  The final case is when the D\&C returns a separating hyperplane $(h,\delta)$.  Note that due to the $\hat{r}$ we use, this already implies \eqref{eq:dlp3} is infeasible.  Then by Lemma~\ref{lem:equality-forcing1}, we know there exists some $l$ such that $c_lx = d_l + \frac{\nu}{2}$ for all $x$ satisfying $Ax=b, Cx \leq d + \frac{\nu}{2}$. Again, as in Case 2, this implies that $Ax = b$, $Cx \leq d$ is infeasible.\end{proof}

Since the running time of the D\&C subroutine is a polynomial in $\hat{r}$, and $\hat{r}$ is exponential in the input data, our algorithm is not guaranteed to run in polynomial time. However, as mentioned in the Introduction, it has certain advantages over Chubanov's polynomial time LP algorithm from~\cite{chubanovlp}. Firstly, it avoids the complicated reformulations used by Chubanov, which can potentially cause the actual runtime of his algorithm to be bad in practice. Secondly, the Chubanov Relaxation algorithm requires multiple iterations of the D\&C subroutine, whereas the LFG algorithm uses only one application of the D\&C subroutine. \old{This might again lead to a better runtime in practice.}

\old{
\begin{cor}
If \eqref{eq:orsys} takes the form
\begin{equation} \label{eq:bddx}
\begin{array}{l}
Ax = b \\
\textbf{0} \leq x \leq \lambda \textbf{1}
\end{array}
\end{equation}
then the LFG Algorithm runs in strongly polynomial time.
\end{cor}

\begin{proof}

\end{proof}
}


\section{Computational Experiments} \label{experiments}

In this final section, we investigate the computational performance of the various relaxation methods mentioned in the preceding sections.  We used MATLAB to implement the following algorithms.

\begin{enumerate}
\item {\em The Chubanov D\&C algorithm}, as described in Section~\ref{basicchubanov}. We use this algorithm on linear feasibility problems in the following way. We choose $z=0$, $\epsilon = 1 \times 10^{-6}$ and $r$ is taken as $\sqrt{n + 1}$ for binary problems, and it is taken as the input dependent bound given in Lemma~\ref{lem:schrijver-bound}. This would mean that when the algorithm terminates, we either have an $\epsilon$-approximate solution, or we conclude that the system is infeasible.
\item {\em The Chubanov Relaxation algorithm}, as described in~\cite{chubanov}. Apart from the input data $A, b, C, d$, this algorithm requires as input a real number $r$ such that the feasible region is contained in $B(0,r)$. As with the Chubanov D\&C algorithm above, $r$ is taken as $\sqrt{n + 1}$ for binary problems, and it is taken as the input dependent bound given in Lemma~\ref{lem:schrijver-bound} for general linear feasibility.
\item {\em The LFS algorithm}, as described in Section~\ref{strictlps}. This algorithm also requires  as input a real number $r$ such that the feasible region is contained in $B(0,r)$. As with the Chubanov D\&C algorithm above, $r$ is taken as $\sqrt{n + 1}$ for binary problems, and it is taken as the input dependent bound given in Lemma~\ref{lem:schrijver-bound} for general linear feasibility problems. Moreover, we require as input $\Delta_A$, the maximum subdeterminant of the matrix $A$. We use the standard bound $\Delta_A \leq n^{\frac{n}{2}}|a_{max}|^n$, where $a_{max}$ is the entry of $A$ with largest absolute value.
\item Two versions of the original relaxation algorithm developed by Agmon \cite{agmon}, and Motzkin and Schoenberg \cite{motzkinschoenberg}.
\end{enumerate}

Recall that the Chubanov Relaxation algorithm is not really a linear feasibility algorithm; it may sometimes report that the solution has no integer solutions (see the statement of Theorem~\ref{thm:0-1sol}). However, we feel it is still interesting to study its practical run time, and compare it with our linear feasibility algorithms which are also based on the Chubanov D\&C algorithm. 

It has already been shown that, for most purposes, the original relaxation method is not able to compete with other linear programming algorithms \cite{goffinnonpoly,telgen}. Thus, the purpose of these experiments is not to compare the running times with current commercial software.  Rather, we want to determine how the new relaxation-type algorithms, based on the Chubanov D\&C algorithm, compare with the original relaxation algorithm suggested by Agmon, and Motzkin and Schoenberg.  Despite the improved theoretical performance of the new algorithms, the tables below show that, in practice, the original relaxation method is by far the preferred method in almost every case.

As noted above, the algorithms and experiment scripts were all developed in MATLAB 7.12.0.  No parallelization was incorporated into the algorithms.  The computational experiments were run on a personal computer with an Intel Core i5 M560 2.67 GHz processor.  The problems used were drawn from the Netlib repository \cite{Netlib}, the MIPLIB repository \cite{BixbyCeriaMcZealSavelsbergh1998,KochEtAl2011}, Hoffman's experiments \cite{hoffman}, Telgen's and Goffin's example \cite{goffinnonpoly,telgen} which shows the exponential behavior of the original relaxation method, and some randomly generated problems. The code and the problems used are available at {\tt http://www.math.ucdavis.edu/$\sim$mjunod/}.

In every table, a dash ``--'' denotes an experiment that exceeded our default time limit of 10 minutes.  For example, in Table \ref{tab:ChubAndLFS}  Chubanov's algorithm timed out on the third Telgen experiment.  Given the size and complexity of the problems involved, we determined that any algorithm exceeding 10 minutes had already shown how practically inefficient it was for that problem. \old{In cases where a particular algorithm timed out on all the test problems, the column is not shown.  For example, the LFS results are not shown in Tables \ref{tab:ChubAndLFS} and \ref{tab:random0/1problems} because all the experiments timed out.  Similarly, }When all of the algorithms timed out on a single problem, the results for the problem are not reported in the tables. \old{Every experiment in the tables uses the origin as its starting point. When necessary, we start experiments with a radius derived from the Schrijver bound (Corollary 10.2b in \cite{Schrijver1986}), or we let $r = \lceil \sqrt{n+1} \rceil$ when dealing with a binary problem.  However, in some of the non-binary problems the Schrijver bound becomes intractable in MATLAB.  Thus for }We note here that for some experiments we rely on some familiarity with the problem and determine our own bound for $r$ in the algorithms, to speed up the computations. \old{Significantly larger radii will lead to longer run times of the D\&C based algorithms.  }All of the run times reported are in seconds.

We note here that our LFS algorithm timed out on all instances tried. The Chubanov Relaxation algorithm and the Chubanov D\&C algorithms timed out on all Netlib and MIPLIB problems, as well as on all the problems from Hoffman's experiments. Hence, the LFS algorithm is not reported in any table. Results for the Chubanov Relaxation algorithm and the Chubanov D\&C algorithms on the Telgen examples are reported in Table~\ref{tab:ChubAndLFS}, and their results on the random $0-1$ instances are reported in Table~\ref{tab:random0/1problems}.

\begin{table}[htp] 
\caption{Telgen Results for the Chubanov Relaxation Algorithm and the Chubanov D\&C Algorithm}  \label{tab:ChubAndLFS}
\centering
	\begin{tabular}{| l | c | c | c | c |}
	\cline{2-5}
	\multicolumn{1}{c}{} & \multicolumn{2}{|c|}{Chubanov Relaxation} & \multicolumn{2}{|c|}{Chubanov D\&C} \\
	\hline
	\multirow{2}{*}{Experiment} & \multirow{2}{*}{Recursions} & Time & \multirow{2}{*}{Recursions} & Time \\
	&  & (Sec) &  & (Sec) \\
	\hline
	Telgen ($\alpha = 1$) & 139254 & 30.4761 & 51 & 0.0010 \\
	\hline
	Telgen ($\alpha = 2$) & $2.12991 \times 10^6$ & 451.817 & 109 & 0.0177 \\
	\hline
	Telgen ($\alpha = 3$)  & -- & -- & 116 & 0.0211\\
	\hline
	Telgen ($\alpha = 4$)  & -- & -- & 124 & 0.0169 \\
	\hline
	Telgen ($\alpha = 5$)  & -- & -- & 132 & 0.0218 \\
	\hline
	\end{tabular} 
\end{table}

Table \ref{tab:motzkin1} compares two variants of the original relaxation method.  The two versions of the original relaxation method differ only in how the violated constraint is chosen.  See \cite{agmon, motzkinschoenberg} for a full explanation of the algorithm.  In the first implementation, called ``Regular'' in the tables, we chose the maximally violated constraint as specified by Agmon, and Motzkin and Schoenberg.  Our second implementation, called ``Random'' in the tables, randomly chooses a violated constraint to see if there is any practical gain, as Needell's paper on the Kaczmarz method \cite{Needell} suggests might be possible.  Every time we ran the ``Random'' version on a problem, we ran it 100 times and we are reporting the average number of iterations, time in seconds, and the standard deviation of each data set.  In the experiments labeled Telgen, only two constraints exist in the problem and only one is violated at any iteration.  Thus only the ``Regular'' version is reported as the two gave identical results.  For every experiment we set $\lambda = 1.9$ as a higher over-projection constant increases the speed of convergence, and let $\epsilon = 1 \times 10^{-6}$ be our error constant.

Table \ref{tab:random0/1problems} compares the performance of the Chubanov Relaxation algorithm, the Chubanov D\&C algorithm and the original relaxation algorithm, on the randomly generated 0-1 problem set.  These were problems of the form $Ax = b$, $\zero \leq x \leq \one$ with the dimension noted in each row.  We limited ourselves to a randomly generated 0-1 matrix $A$ with anywhere from 1 to $n-1$ rows, also randomly chosen, and populated $b$ with integers randomly chosen from the set $\{1,\ldots,n\}$. \old{Our goal here was to generate problems with a reasonable likelihood of being feasible and with a feasible region that could intersect the 0-1 cube formed by the inequality constraints at any corner. }For each dimension, we generated 10 random problems and then reported the average behavior along with the standard deviation.  Note that for the D\&C algorithm, the high standard deviations indicate that for the vast majority of the problems it ran quickly.

\old{Together Tables \ref{tab:motzkin1} and \ref{tab:random0/1problems} list most of the problems used in the experiments.  Table \ref{tab:ChubAndLFS} is shortened because the D\&C algorithms timed out on all of the additional problems listed in Table \ref{tab:motzkin1}.  Taken all together, these results show that the new D\&C based relaxation algorithms cannot compete practically with the original relaxation algorithm.}

Despite what appears to be the reasonable performance of the Chubanov D\&C algorithm with the Random 0-1 problems, both the Chubanov Relaxation algorithm and the LFS algorithm performed much worse (in fact, the LFS algorithm timed out on all instances). Ironically, it is actually the D\&C subroutine in these algorithms that causes this. Indeed, when we strengthen and homogenize the linear system, we greatly increase the parameter $r$ that is used by the D\&C subroutine in these two algorithms, creating \old{the need to run through not just several more recursions -- as increasing $r$ and potentially $\|c_{max}\|$ only increases the depth of the recursion tree by a logarithmic amount -- but }a very large number of new nodes that are added to the recursion tree of the D\&C algorithm.  \old{These additional nodes are what really slow down the algorithms, as does the need of Chubanov's algorithm to run the D\&C on this increased number of nodes multiple times.  }This creates a significant increase in the run times of these new algorithms, and as a result, they cannot compete practically.

\begin{sidewaystable}[htp] 
\caption{Test Results for the Original Relaxation Algorithms} \label{tab:motzkin1}
\centering
	\begin{tabular}[c]{| l | c | c | c | c | c | c |}
	\cline{2-7}
	\multicolumn{1}{c}{} & \multicolumn{2}{|c|}{Regular} & \multicolumn{4}{|c|}{Random} \\
	\hline
	\multirow{2}{*}{Experiment} & \multirow{2}{*}{Iterations} & \multirow{2}{*}{Time (Sec)} & \multicolumn{2}{|c|}{Iterations} & \multicolumn{2}{|c|}{Time (Sec)} \\
	\cline{4-7}
	& & & Avg/Std Dev & Min/Max & Avg/Std Dev & Min/Max \\
	\hline
	Telgen ($\alpha = 1$) & 7 & 0.0077 & \multicolumn{4}{|c|}{N/A} \\
	\hline
	Telgen ($\alpha = 2$) & 14 & 0.0013 & \multicolumn{4}{|c|}{N/A} \\
	\hline
	Telgen ($\alpha = 3$) & 28 & 0.0033 & \multicolumn{4}{|c|}{N/A} \\
	\hline
	Telgen ($\alpha = 4$) & 94 & 0.0073 & \multicolumn{4}{|c|}{N/A} \\
	\hline
	Telgen ($\alpha = 5$) & 2153 & 0.0991 & \multicolumn{4}{|c|}{N/A} \\
	\hline
	ADLITTLE & 1774 & 0.29 & -- & -- & -- & -- \\
	\hline
	AFIRO & 1018 & 0.0795 & 948/0 & 948/948 & 0.1102/0.0052 & 0.1082/0.1583 \\
	\hline
	BEACONFD & 882 & 0.3296 & -- & -- & -- & -- \\
	\hline
	BLEND & 56241 & 7.4636 & 4783/0 & 4783/4783 & 1.1451/0.0179 & 1.1198/1.2377  \\
	\hline
	E226 & $1.01592 \times 10^6$ & 490.399 & -- & -- & -- & -- \\
	\hline
	RECIPELP & 11008 & 0.7245 & -- & -- & -- & -- \\
	\hline
	SC50A & 24 & 0.0245 & 137/0 & 137/137 & 0.0373/0.0034 & 0.0239/0.0587  \\
	\hline
	SC50B & 9 & 0.0194 & 86/0 & 86/86 & 0.0321/0.0028 & 0.0178/0.0422 \\
	\hline
	SC105 & 504 & 0.1133 & 856/0 & 856/856 & 0.1238/0.0094 & 0.1078/0.2107 \\
	\hline
	SCAGR7 & 41716 & 8.377 & 26144/0 & 26144/26144 & 4.8554/0.0507 & 4.7997/5.0119 \\
	\hline
	SHARE2B & 591986 & 58.7718 & 18469/0 & 18469/18469 & 5.8692/0.0316 & 5.8274/6.0655 \\
	\hline
	STOCFOR1 & $1.7042 \times 10^6$ & 236.971 & -- & -- & -- & -- \\
	\hline
	Hoffman (6D) & 8 & 0.0017 & 6.64/0.6594 & 5/7 & 0.0138/0.0015 & 0.0125/0.0164 \\
	\hline
	Hoffman (7D) & 11 & 0.0026 & 8.78/0.9383 & 6/10 & 0.0141/0.0012 & 0.0131/0.0185\\
	\hline
	Hoffman (8D) & 11 & 0.0019 & 10.04/2.6777 & 7/16 & 0.0139/0.0019 & 0.0045/0.0151 \\
	\hline
	Hoffman (9D) & 49 & 0.0045 & 61.32/6.6087 &  40/77 & 0.0179/0.0020 & 0.0148/0.0191 \\
	\hline
	Hoffman (10D) & 9982 & 0.3253 & -- & -- & -- & -- \\
	\hline
	
	\hline
	\end{tabular}
\end{sidewaystable}

\begin{table}[htp]
\caption{Test Results for Random Problems Bounded by the 0-1 Cube $[0,1]^n$} \label{tab:random0/1problems} 
\centering
	\begin{tabular}{| l | c | c | c | c | c | c |}
	\cline{2-7}
	\multicolumn{1}{c}{} & \multicolumn{2}{|c|}{Chubanov Relaxation} & \multicolumn{2}{|c|}{Chubanov D\&C} & \multicolumn{2}{|c|}{Original Relaxation}\\
	\hline
	\multirow{2}{*}{Experiment} & Recursions & Time (Sec) & Recursions & Time (Sec) & Iterations & Time (Sec)\\	
	& (Avg/SD) & (Avg/SD) & (Avg/SD) & (Avg/SD) & (Avg/SD) & (Avg/SD) \\
	\hline
	\multirow{2}{*}{Random 2D} & 61420/ & 12.309/ & 64.7/ & 0.0051/ & 0.5/ & 0.0004/ \\
	& 0 & 0.2771 & 21.679 & 0.0026 & 0.5 & 0.0003 \\
	\hline
	\multirow{2}{*}{Random 3D} & 151522/ & 30.057/ & 1258.3/ & 0.2122/ & 0.75/ & 0.0005/ \\
	& 0 & 0.2518 & 3799.59 & 0.6538 & 0.5 & 0.0002 \\
	\hline
	\multirow{2}{*}{Random 4D} & 520152/ & 102.52/ & 419657.3/ & 70.566/ & 0.6667/ & 0.0004/ \\
	& 0 & 0.7359 & $1.1198 \times 10^6$ & 187.59 & 0.5774 & 0.0003 \\
	\hline
	\multirow{2}{*}{Random 5D} & $1.012 \times 10^6$/ & 199.94/ & $1.3395 \times 10^6$/ & 247.76/ & 0.3333/ & 0.0002/ \\
	& 0 & 1.8034 & $1.6308 \times 10^6$ & 304.01 & 0.5774 & 0.0001 \\
	\hline
	\multirow{2}{*}{Random 6D} & $1.733 \times 10^6$/ & 344.36/ & 774996.6/ & 140.28/ & 20/ & 0.0015/ \\
	& 0 & 1.0829 & $1.63 \times 10^6$ & 250.41 & 26.870 & 0.0016 \\
	\hline
	\multirow{2}{*}{Random 7D} & $2.6938 \times 10^6$/ & 544.78/ & 772084.4/ & 120.01/ & \multirow{2}{*}{--} & \multirow{2}{*}{--} \\
	& 68925 & 19.641 & $1.6303 \times 10^6$ & 252.98 &  &  \\
	\hline
	\multirow{2}{*}{Random 8D} & \multirow{2}{*}{--} & \multirow{2}{*}{--} & 165.3/ & 0.0108/ & 0.6667/ & 0.0004/ \\
	&  &  & 355.41 & 0.0209 & 0.5774 & 0.0002 \\
	\hline
	\multirow{2}{*}{Random 9D} & \multirow{2}{*}{--} & \multirow{2}{*}{--} & 309896.5/ & 60.004/ & 1/ & 0.0004/ \\
	&  &  & 979808.4 & 189.74 & 0 & 0 \\
	\hline
	\multirow{2}{*}{Random 10D} & \multirow{2}{*}{--} & \multirow{2}{*}{--} & 32224.1/ & 2.0451/ & 239/ & 0.0127/ \\
	&  &  & 101037.4 & 6.3976 & 0 & 0 \\
	\hline
	\end{tabular}
\end{table}


\pagebreak

\bibliographystyle{plain}
\bibliography{bibliolp}

\end{document}